\tikzstyle{vertex}=[fill=black, draw=black, shape=circle, inner sep=0.5mm]
\tikzstyle{double arrow}=[<->]
\tikzstyle{right arrow}=[->]
\tikzstyle{light edge}=[-]
\title{Markov Chain-based Sampling for Exploring RNA Secondary Structure under the Nearest Neighbor Thermodynamic Model}
\author{Anna Kirkpatrick, Kalen Patton}
\date{\today}
\newcolumntype{L}{>{$}l<{$}} % math-mode version of "l" column type
\DeclarePairedDelimiter\ceil{\lceil}{\rceil}
\DeclarePairedDelimiter\floor{\lfloor}{\rfloor}
\newcommand{\N}{\mathbb{N}}
\newcommand{\R}{\mathbb{R}}
\newcommand{\e}{\epsilon}
\newcommand{\E}{\mathbb{E}}
\newcommand{\PTrees}{\mathfrak{T}}
\newcommand{\DPaths}{\mathfrak{D}}
\newcommand{\Bimots}{\mathfrak{M}^2}
\newcommand{\M}{\mathcal{M}}
\newcommand{\Gibbs}{\mathbf{g}}
\newcommand{\proj}[1]{\overline{#1}}
\newcommand{\Gap}{\text{Gap}}
\newcommand{\s}{\sigma}
\newtheorem{theorem}{Theorem}[section]
\newtheorem{lemma}[theorem]{Lemma}
\begin{document}

\maketitle

%%%%%%%%%%%%%%%%%%%%%%%%%%%%%%%%%%%%%%%%%%%%%%%%%%%%%%%%%%%%%%%%%%%%%%%%%%%%%%%%%%%%%%%%%%%%%%%%%%%%%%%%%%%%%%%%%%%%%%%%%%%%%%%%%%%
%   ABSTRACT
%%%%%%%%%%%%%%%%%%%%%%%%%%%%%%%%%%%%%%%%%%%%%%%%%%%%%%%%%%%%%%%%%%%%%%%%%%%%%%%%%%%%%%%%%%%%%%%%%%%%%%%%%%%%%%%%%%%%%%%%%%%%%%%%%%%
\begin{abstract}
    We study plane trees as a model for RNA secondary structure, assigning energy to each tree based on the Nearest Neighbor Thermodynamic Model, and defining a corresponding Gibbs distribution on the trees.
    Through a bijection between plane trees and 2-Motzkin paths, we design a Markov chain converging to the Gibbs distribution, and establish fast mixing time results by estimating the spectral gap of the chain. 
    The spectral gap estimate is established through a series of decompositions of the chain and also by building on known mixing time results for other chains on Dyck paths.
    In addition to the mathematical aspects of the result, the resulting algorithm can be used as a tool for exploring the branching structure of RNA and its dependence on energy model parameters. The pseudocode implementing the Markov chain is provided in an appendix.

\end{abstract}
%%%%%%%%%%%%%%%%%%%%%%%%%%%%%%%%%%%%%%%%%%%%%%%%%%%%%%%%%%%%%%%%%%%%%%%%%%%%%%%%%%%%%%%%%%%%%%%%%%%%%%%%%%%%%%%%%%%%%%%%%%%%%%%%%%%
%   INTRO
%%%%%%%%%%%%%%%%%%%%%%%%%%%%%%%%%%%%%%%%%%%%%%%%%%%%%%%%%%%%%%%%%%%%%%%%%%%%%%%%%%%%%%%%%%%%%%%%%%%%%%%%%%%%%%%%%%%%%%%%%%%%%%%%%%%
\section{Introduction}
We present a Markov chain capable of sampling plane trees from a Gibbs distribution, where the energy associated with each tree depends on its combinatorial properties, and we prove that this chain is mixing fast - the rate of convergence to equilibrium is at most a polynomial in the size of the tree. While we define various parameters related to the analysis of (finite) Markov chains precisely in the next section, for now it will be convenient to recall that the inverse of the spectral gap of a Markov chain is usually referred to as the ``relaxation time", and is a good measure of the approach to equilibrium of the chain.
Our sampling algorithm might be used  to study the branching properties of thermodynamically probable RNA secondary structures and their dependence on energy model parameters.
Some biological background is provided here for context and motivation, but the reader interested only in the mathematical results may skip to Section~\ref{math-intro}.

%%%%%%%%%%%%%%%%%%%%%%%%%%%%%%%%%%%%%%%%%%%%%%%%%%%%%%%%%%%%%%%%%%%%%%%%%%%%%%%%%%%%%%%%%%%%%%%%%%%%%%%%%%%%%%%%%%%%%%%%%%%%%%%%%%%
%  BIO BACKGROUND
%%%%%%%%%%%%%%%%%%%%%%%%%%%%%%%%%%%%%%%%%%%%%%%%%%%%%%%%%%%%%%%%%%%%%%%%%%%%%%%%%%%%%%%%%%%%%%%%%%%%%%%%%%%%%%%%%%%%%%%%%%%%%%%%%%%
\subsection{Biological Motivation}

RNA is an essential biological polymer with many roles including information transfer and regulation of gene expression.
The \emph{primary structure} of an RNA molecule may be understood as a sequence of amino acids: arginine, urasil, guanine, and cytosine.
As is standard, we frequently abbreviate these as A, U, G, and C, respectively.
RNA molecules are single-stranded and may therefore interact with themselves, forming A-U, G-U, and G-C bonds.
The \emph{secondary structure} of an RNA molecule is a set of such bonds.
Secondary structure may prove especially useful in understanding viral genomes; some experimental evidence links properties of molecules directly to secondary structure, see e.g.  Borodavka et al.~\cite{borodavka2016}.

We study a model for RNA secondary structure developed by Hower and Heitsch \cite{hower2011}, in which secondary structures are in bijection with plane trees. 
The minimum energy structures under this model were characterized in the original paper, but this leaves open the question of the full Gibbs distribution of possible structures. 
Bakhtin and Heitsch~\cite{Bakhtin_Heitsch_09} analyzed a very similar model and determined degree sequence properties of the distribution of plane trees asymptotically.
We will present a Markov chain-based sampling algorithm which can be used to investigate this distribution in the finite case.
For the interested reader, a full explanation of the plane tree model as well as the derivation of the energy functions is provided at the end of this paper in Appendix~\ref{energy_derivation}.

%%%%%%%%%%%%%%%%%%%%%%%%%%%%%%%%%%%%%%%%%%%%%%%%%%%%%%%%%%%%%%%%%%%%%%%%%%%%%%%%%%%%%%%%%%%%%%%%%%%%%%%%%%%%%%%%%%%%%%%%%%%%%%%%%%%
% MATH BACKGROUND
%%%%%%%%%%%%%%%%%%%%%%%%%%%%%%%%%%%%%%%%%%%%%%%%%%%%%%%%%%%%%%%%%%%%%%%%%%%%%%%%%%%%%%%%%%%%%%%%%%%%%%%%%%%%%%%%%%%%%%%%%%%%%%%%%%%
\subsection{Mathematical Motivation}
\label{math-intro}

The plane trees which we study as a model for RNA secondary structure are of independent mathematical interest.
As Catalan objects, they have been studied combinatorially (see, for example, \cite{Dershowitz_Zaks_86, Stanley2011}), and our proof builds on this base of knowledge.
Markov chains on Catalan objects have received significant attention over the years \cite{Cohen2016, CTY2015, McST99, Soh99, wilson2004},
%({\color{red}{{\em if possible, also cite:} McShine-P.Tetali DIMACS volume 1999 \cite{McST99},  M. Sohoni, Graphs and Combinatorics 1999 \cite{Soh99},   Cohen-Tetali-Yeliussizov 2015 \cite{CTY2015} https://arxiv.org/abs/1505.06710}})
but with very few results providing tight estimates on the corresponding mixing times; most commonly these are discussed in the language of Dyck paths. Cohen's thesis~\cite{Cohen2016} gives an overview of the known mixing time results for chains on Catalan objects.
All of the chains surveyed there have uniform distribution over the Catalan-sized state space as their stationary distribution.
Among these, essentially the only known chain with tight bounds (upper and lower bounds differing by a small multiplicative constant) is due to Wilson~\cite{wilson2004} and gives the relaxation time of $O(n^3)$ for the walk consisting of adjacent transpositions on Dyck paths. In comparison, in \cite{CTY2015} the chain using {\em all} (allowed) transpositions has been shown to have relaxation time of $O(n^2)$, and further conjectured to have $O(n)$ as the relaxation time, in analogy with the random transposition shuffle of $n$ cards.

Judging from the lack of progress on several of these chains, it is evident that determining mixing or relaxation time for these chains is typically a challenging problem, even in the case where the stationary distribution is uniform.

In the current work, the RNA secondary-structure-inspired modeling naturally leads to a state space on Catalan objects with a nonuniform distribution, making the corresponding mixing time analysis even more challenging. 
Another example where mixing times are estimated for Markov chains on Catalan objects with nonuniform stationary distribution is the work of Martin and Randall \cite{Martin2000SamplingAS}, which examines a Gibbs distribution on Dyck paths weighted by the number of returns to the $x$-axis.

%{\color{red}{let us write a couple of sentences on the proof ingredients, including the latest results of Hermon-Salez... Basically that i) we typically bound the relaxation time by bounding the spectral gap from below, and that ii) classical and more recent decomposition theorems give us a way to decompose the complex chain at hand into simpler chains that are more amenable to analysis etc. }}

Au upper bound on the relaxation time is achieved by bounding the spectral gap from below.
A spectral gap bound for the complex chain at hand is obtained through the use of multiple decomposition theorems, which give bounds on the spectral gap of the complex chain in terms of the spectral gaps of multiple simpler chains.
The disjoint decomposition theorem due to Martin and Randall \cite{Martin2000SamplingAS} provides a flexible approach to decomposition of Markov chains.
Very recent work by Hermon and Salez \cite{hermon2019modified}, building on the work of Jerrum, Son, Tetali, and Vigoda \cite{JSTV04}, proves a decomposition theorem with tighter bounds but stronger hypotheses. 
Both decomposition theorems are discussed in more detail in Section \ref{decomposition}.
The simpler chains obtained through the use of the decomposition theorem are then analyzed using classical techniques including coupling.

\subsection{Structure of this paper}
Section \ref{math-preliminaries} of this manuscript provides the necessary mathematical background, including a formal introduction of combinatorial objects and a review of the relevant Markov chain mixing results.
Section \ref{our_chain} defines the chain we devise for the sampling task, and Section \ref{results} contains the proof of an upper bound on the relaxation time - that the chain mixes rapidly.
Section \ref{conclusion} presents some closing thoughts and open questions.
We also include 2 appendices.
Appendix \ref{energy_derivation} contains details of the RNA model, and Appendix \ref{pseudocode} consists of pseudocode for our Markov chain algorithm.

%%%%%%%%%%%%%%%%%%%%%%%%%%%%%%%%%%%%%%%%%%%%%%%%%%%%%%%%%%%%%%%%%%%%%%%%%%%%%%%%%%%%%%%%%%%%%%%%%%%%%%%%%%%%%%%%%%%%%%%%%%%%%%%%%%%
%   Preliminaries
%%%%%%%%%%%%%%%%%%%%%%%%%%%%%%%%%%%%%%%%%%%%%%%%%%%%%%%%%%%%%%%%%%%%%%%%%%%%%%%%%%%%%%%%%%%%%%%%%%%%%%%%%%%%%%%%%%%%%%%%%%%%%%%%%%%
\section{Mathematical Preliminaries}
\label{math-preliminaries}

%%%%%%%%%%%%%%%%%%%%%%%%%%%%%%%%%%%%%%%%%%%%%%%%%%%%
\subsection{Combinatorial Objects}
A \emph{plane tree} is a rooted, ordered tree. We will use $\PTrees_n$ to denote the set of plane trees with $n$ edges. It is known that $|\PTrees_n|$ is given by the $n$th Catalan number $C_n = \frac{1}{n+1}\binom{2n}{n}$. In a plane tree, a \emph{leaf} is a node with down degree 0, and an \emph{internal node} is a non-root node with down degree 1. For a given plane tree $t$, we will use $d_0(t)$ to denote the number of leaves and $d_1(t)$ to denote the number of internal nodes.

For a plane tree $t$, the \emph{energy} of the tree is given by

\begin{equation}
    E(t) = \alpha d_0(t) + \beta d_1(t),
\end{equation}
where $\alpha$ and $\beta$ are real parameters of the energy function. For our purposes, we consider $\alpha$ and $\beta$ to be arbitrary but fixed. We will consider a Gibbs distribution $\Gibbs$ on the set $\PTrees_n$, where the weight of each tree $t$ is given by
\begin{equation}
    \Gibbs(t) = \frac{e^{-E(t)}}{Z},
\end{equation}
where $Z = \sum_{y \in \PTrees_n} e^{-E(y)}$ is a normalizing constant.

A \emph{Motzkin path} of length $n$ is a lattice path from $(0, 0)$ to $(n, 0)$, which consists of steps along the vectors $U = (1, 1)$, $H = (1, 0)$, and $D = (1, -1)$ and never crosses below the $x$-axis. We can also represent Motzkin paths as strings from the alphabet $\{U, H, D\}$ where, in any prefix, the number of $U$s is greater than or equal to the number of $D$s. The number of Motzkin paths of length $n$ is given by the Motzkin numbers $M_n$ where
\begin{equation}
    M_n = \sum_{k=0}^{\floor{n/2}} \binom{n}{2k} C_k.
\end{equation}

A \emph{Dyck path} is a Motzkin path with no $H$ steps. It is easy to see that a Dyck path must have even length, so we will use $\DPaths_n$ to denote the set of Dyck paths on length $2n$. It is well known that $|\DPaths_n| = C_n$.

A \emph{2-Motzkin path} is a Motzkin path in which $(1, 0)$ steps are given one of two distinguishable colors. Let $\Bimots_m$ be the set of all 2-Motzkin paths of length $m$. We can also represent 2-Motzkin paths as strings from the alphabet $\{U, H, I, D\}$, where as before, the number of $D$s never exceeds the number of $U$s in any prefix. In a such a string $x$, we denote by $|x|_a$ the number of times the symbol $a$ appears in $x$, where $a \in \{U, H, I, D\}$. Notice that we always have $|x|_U = |x|_D$. For any $x \in \Bimots_n$ and $k \in \{1, \cdots, n\}$, let $x(k)$ denote the symbol at index $k$ in the string representation of $x$. Additionally, the \emph{skeleton} of a 2-Motzkin path $x$ is the Dyck path of $U$s and $D$s which results from removing all $H$s and $I$s from $x$. We will denote the skeleton of $x$ by $\s(x)$.

%%%%%%%%%%%%%%%%%%%%%%%%%%%%%%%%%%%%%%%%%%%%%%%%%%%%
\subsection{A Bijection Between $\PTrees_n$ and $\Bimots_{n-1}$}

We will use the particular bijection $\Phi \colon \PTrees_n \rightarrow \Bimots_{n-1}$ between plane trees and 2-Motzkin paths from Deutsch \cite{DEUTSCH2002655}, which neatly encodes information about $d_0$ and $d_1$. For clarity, we will overview the bijection here.

For a given plane tree $t$ with $n$ edges, assign a label from the set $\{U, H, I, D\}$ to each edge $e$ according to the following rules:
\begin{itemize}
    \item If $e$ is the leftmost edge off a non-root node of down degree at least $2$, assign the label $U$.
    \item If $e$ is the rightmost edge off a non-root node of down degree at least $2$, assign the label $D$.
    \item If $e$ is the only edge off a non-root node of degree 1, assign the label $I$.
    \item If $e$ is an edge off the root node, or if $e$ is neither the leftmost nor the rightmost edge off its parent node, assign the label $H$.
\end{itemize}
Now, if we traverse $t$ in preorder reading off these labels, we get a 2-Motzkin path of length $n$. However, this path will always begin with $H$, so we define $\Phi(t)$ to be the 2-Motzkin path of length $n-1$ after this initial $H$ is removed. Figure \ref{fig:bijection} gives an example of this labeling process. From Deutsch, we know not only that $\Phi$ is a bijection, but also that if  $x = \Phi(t)$ then $|x|_I = d_1(t)$ and $|x|_U + |x|_H + 1 = d_0(t)$.

\begin{figure}
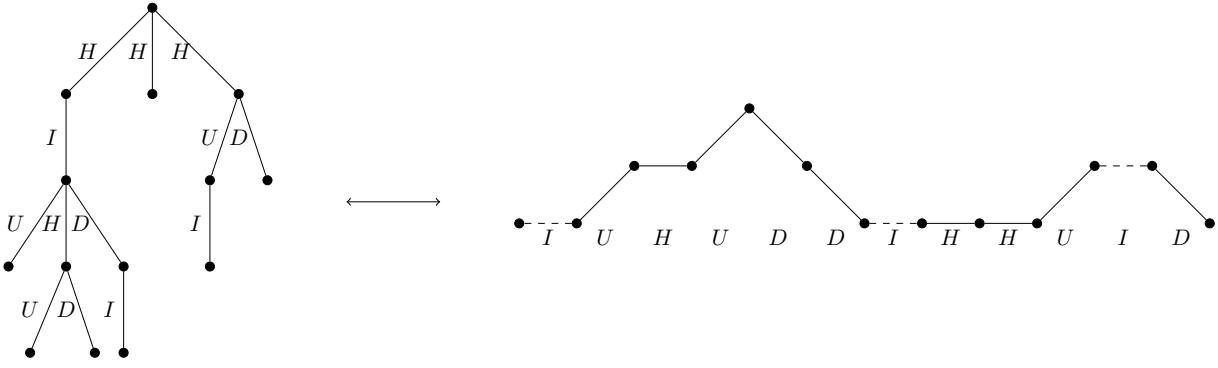

    \centering
    \scalebox{0.85}{\tikzfig{plane_tree_ex1}}
    \caption{A plane tree with edges labeled according to the bijection $\Phi$, along with its corresponding 2-Motzkin path.}
    \label{fig:bijection}
\end{figure}

Using this bijection, it is natural to extend our energy function  to 2-Motzkin paths. We define the energy of a 2-Motzkin path $x$ to be 
\begin{equation}
    E(x) = \alpha (|x|_U + |x|_H + 1) + \beta |x|_I,
\end{equation}
and we extend our definition of the distribution $\Gibbs$ to $\Bimots_n$ accordingly.

%%%%%%%%%%%%%%%%%%%%%%%%%%%%%%%%%%%%%%%%%%%%%%%%%%%%
\subsection{Markov Chains}

A \emph{Markov chain} $\M$ is a sequence of random variables $X_0, X_1, X_2, \cdots$ taking values in a state space $\Omega$ subject to the condition that
\begin{equation}
    \Pr(X_{t+1} = y \mid X_t = x,\, X_{t-1} = s_{t-1},\, \cdots,\, X_0 = s_0) = \Pr(X_{t+1} = y \mid X_t = x).
\end{equation}
All Markov chains that we consider will be implicitly \emph{time-homogeneous}, which is to say that the probability $\Pr(X_{t+1} = y \mid X_t = x)$ does not depend on $t$. Additionally, all Markov chains will be \emph{finite}, meaning $|\Omega| < \infty$. The \emph{transition matrix} of a time-homogeneous Markov chain is the matrix $P \colon \Omega \times \Omega \to [0,1]$ given by
\begin{equation}
    P(x, y) = \Pr(X_{t+1} = y \mid X_t = x).
\end{equation}
It is easy to see that if $X_0$ has distribution vector $\mathbf{x}$, then $X_t$ has distribution vector $P^t \mathbf{x}$. 

A finite Markov chain with transition matrix $P$ is said to be \emph{ergodic} is it has the following two properties.
\begin{enumerate}
    \item \emph{Irreducibility:}  For any $x,y \in \Omega$, there is some integer $t \in \N$ for which $P^t(x, y) > 0$.
    \item \emph{Aperiodicity:}  For any state $x \in \Omega$, we have $\gcd\{t \in \N \colon P^t(x,x) > 0\} = 1$.
\end{enumerate}
It is well known that if $\M$ is ergodic, then there exists a unique distribution vector $\pi$ such that $P \pi = \pi$, and that $\lim_{t \to \infty} P^t(x,y) = \pi(y)$ for any states $x,y \in \Omega$. We call $\pi$ the \emph{stationary distribution} of $\M$. Additionally, we call $\M$ \emph{reversible} if for all states $x, y \in \Omega$, we have
\begin{equation}
    \pi(x) P(x,y) = \pi(y) P(y,x).
\end{equation}

For $\epsilon >0$, the \emph{mixing time} $\tau(\e)$ of $\M$ is given by
\begin{equation}
    \tau(\e) = \min\left\{t \in \N \colon \forall s \geq t,\; \max_{x \in \Omega} \left(\frac{1}{2}\sum_{y \in \Omega} |P^s(x,y) - \pi(y)| \right) < \e \right\}.
\end{equation}
Intuitively, the mixing time gives a measure of the number of steps required for $\M$ to get sufficiently close to its stationary distribution from any starting state.

Let $\M$ be a finite ergodic Markov chain over a state space $\Omega$ with transition matrix $P$. Let the eigenvalues of $P$ be $\lambda_0, \lambda_1, \ldots, \lambda_{|\Omega|-1}$ such that $1 = \lambda_0 > |\lambda_1| \geq \ldots \geq |\lambda_{|\Omega|-1}|$. The \emph{spectral gap} of $\M$ is given by $\Gap(\M) = 1 - |\lambda_1|$. As is standard, it will be convenient to denote the inverse of the spectral gap by {\em relaxation time} $\tau_{rel}(\M):=1/\Gap(\M)$.

Additionally, the spectral gap is given by the following functional definition \cite{madras2002}.
\begin{equation}
    \Gap(\M) = \inf_f \frac{\sum_{x,y \in \Omega}|f(x) - f(y)|^2 \pi(x) P(x,y)}{\sum_{x,y \in \Omega}|f(x) - f(y)|^2 \pi(x) \pi(y)},
\end{equation}
where the infimum is taken over all non-constant functions $f \colon \Omega \to \R$. A direct consequence of this definition of the spectral gap is the following lemma.
\begin{lemma}\label{prob-comp-lemma}
Let $\M_1$ and $\M_2$ be ergodic Markov chains over $\Omega$ with the same stationary distribution. Let $P_1$ and $P_2$ be the transition matrices of $\M_1$ and $\M_2$ respectively. If for all $x,y \in \Omega$ and for some constant $c > 0$ we have $P_1(x,y) \leq c P_2(x,y)$, then $\Gap(\M_1) \leq c \Gap(\M_2)$.
\end{lemma}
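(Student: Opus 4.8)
The plan is to work directly from the functional (Dirichlet-form) characterization of the spectral gap recorded just above. Introduce the shorthand $\mathcal{E}_i(f) = \sum_{x,y \in \Omega} |f(x)-f(y)|^2 \pi(x) P_i(x,y)$ for the Dirichlet form of $\M_i$ and $V(f) = \sum_{x,y \in \Omega} |f(x)-f(y)|^2 \pi(x)\pi(y)$ for the (common) denominator, so that the functional definition reads $\Gap(\M_i) = \inf_f \mathcal{E}_i(f)/V(f)$, the infimum being over non-constant $f \colon \Omega \to \R$. The crucial bookkeeping point is that $V(f)$ depends only on the stationary distribution $\pi$, which by hypothesis is shared by $\M_1$ and $\M_2$; moreover $V(f) > 0$ for every non-constant $f$, so the ratios are well defined.

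The key step is a termwise comparison of the two Dirichlet forms. Every summand $|f(x)-f(y)|^2 \pi(x)$ is nonnegative and vanishes whenever $x = y$, so only the off-diagonal entries of $P_1$ and $P_2$ contribute to $\mathcal{E}_1(f)$ and $\mathcal{E}_2(f)$. Applying the hypothesis $P_1(x,y) \le c\, P_2(x,y)$ (needed only for $x \ne y$) to each term then gives
\[
\mathcal{E}_1(f) = \sum_{x \ne y} |f(x)-f(y)|^2 \pi(x) P_1(x,y) \;\le\; c \sum_{x \ne y} |f(x)-f(y)|^2 \pi(x) P_2(x,y) = c\, \mathcal{E}_2(f)
\]
for every non-constant $f$.

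Dividing through by $V(f) > 0$ yields $\mathcal{E}_1(f)/V(f) \le c\, \mathcal{E}_2(f)/V(f)$ for each non-constant $f$; taking the infimum over all such $f$ on both sides and pulling the positive constant $c$ outside the infimum on the right gives $\Gap(\M_1) \le c\, \Gap(\M_2)$, which is the claim. I do not expect a genuine obstacle here: the argument is immediate once the Dirichlet forms are compared term by term. The only points meriting a moment's care are (i) recording that the shared stationary distribution makes the denominators in the two variational problems literally identical, and (ii) observing that the diagonal terms drop out, so one neither needs nor should assume any comparison between the holding probabilities $P_1(x,x)$ and $P_2(x,x)$.
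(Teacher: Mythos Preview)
Your argument is correct and is precisely the ``direct consequence of the functional definition'' that the paper alludes to; the paper does not spell out a proof beyond that remark, and your termwise Dirichlet-form comparison is the intended one-line justification.
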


Additionally, spectral gap is related to the mixing time by the following lemma \cite{Randall2006}.

\begin{lemma} \label{spectral-gap-mixing-time-ineq}
Let  $\mathcal{M}$ be an ergodic Markov chain with state space $\Omega $, and let $\lambda _{1}$ be the second eigenvalue of the transition matrix $P$ as defined above. 
Then, for all $\e > 0$ and $x \in \Omega$, we have
\begin{equation}
    \frac{|\lambda_1|}{\Gap(\M)}\log\left(\frac{1}{2\e}\right)
    \leq \tau(\e)
    \leq \frac{1}{\Gap(\M)}\log\left(\frac{1}{\pi(x)\e}\right).
\end{equation}
\end{lemma}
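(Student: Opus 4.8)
The plan is to prove the two inequalities separately from the spectral data of $P$. The lower bound will use only that $\M$ is ergodic, via a test function that is an eigenfunction for the second eigenvalue; the upper bound will use in addition that $\M$ is reversible (as is every chain built in this paper), through the spectral expansion of $P^t$ in an orthonormal eigenbasis followed by a Cauchy--Schwarz passage from $L^2$ to total variation --- and this is the one genuinely delicate point, since for a non-reversible $P$ the matrix need not be diagonalizable, $|\lambda_1|$ alone does not control $\|P^t\|$, and the upper bound can be badly false. Everything else is routine, relying on Cauchy--Schwarz and the two one-line estimates $\Gap(\M)\le-\log|\lambda_1|\le\Gap(\M)/|\lambda_1|$. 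Throughout write $D_x(t):=\tfrac12\sum_{y\in\Omega}|P^t(x,y)-\pi(y)|$ for the quantity appearing in the definition of $\tau(\e)$ (so $D_x(\tau(\e))<\e$ for every $x$), and recall $\Gap(\M)=1-|\lambda_1|$.

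\emph{Lower bound.} Fix an eigenvalue $\lambda$ of $P$ with $|\lambda|=|\lambda_1|$ and a right eigenfunction $f$ for it, scaled so that $\max_{z\in\Omega}|f(z)|=1$, and pick $x$ attaining the maximum. Left-multiplying $Pf=\lambda f$ by the row vector $\pi$ and using $\pi^\top P=\pi^\top$ gives $(\lambda-1)\sum_z\pi(z)f(z)=0$, hence $\sum_z\pi(z)f(z)=0$ since $\lambda\neq1$. Because $P^tf=\lambda^tf$, this yields
\begin{equation*}
 |\lambda_1|^t=|\lambda^tf(x)|=\Bigl|\sum_z\bigl(P^t(x,z)-\pi(z)\bigr)f(z)\Bigr|\le\sum_z|P^t(x,z)-\pi(z)|=2\,D_x(t).
\end{equation*}
Taking $t=\tau(\e)$ gives $|\lambda_1|^{\tau(\e)}<2\e$; taking logarithms (the cases $|\lambda_1|=0$ and $\e\ge\tfrac12$ being trivial) and applying $-\log|\lambda_1|=-\log(1-\Gap(\M))\le\Gap(\M)/(1-\Gap(\M))=\Gap(\M)/|\lambda_1|$ produces $\tau(\e)\ge\frac{|\lambda_1|}{\Gap(\M)}\log\frac1{2\e}$.

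\emph{Upper bound.} Reversibility makes $S(x,y):=\sqrt{\pi(x)/\pi(y)}\,P(x,y)$ symmetric, so $P$ has a basis $f_0\equiv1,f_1,f_2,\dots$ of real eigenfunctions with eigenvalues $1=\lambda_0,\lambda_1,\dots$ that is orthonormal for $\langle g,h\rangle_\pi:=\sum_z\pi(z)g(z)h(z)$, and the symmetric spectral decomposition of $S^t$ rewrites as $P^t(x,y)/\pi(y)=\sum_{j\ge0}\lambda_j^tf_j(x)f_j(y)$. Dropping the $j=0$ term and using Parseval, $\sum_y\pi(y)\bigl(P^t(x,y)/\pi(y)-1\bigr)^2=\sum_{j\ge1}\lambda_j^{2t}f_j(x)^2\le|\lambda_1|^{2t}\sum_{j\ge0}f_j(x)^2=|\lambda_1|^{2t}/\pi(x)$, using $|\lambda_j|\le|\lambda_1|$ for $j\ge1$ and $\sum_jf_j(x)^2=1/\pi(x)$. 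Then Cauchy--Schwarz with weights $\pi(y)$ gives
\begin{equation*}
 2\,D_x(t)=\sum_y\pi(y)\Bigl|\tfrac{P^t(x,y)}{\pi(y)}-1\Bigr|\le\Bigl(\sum_y\pi(y)\bigl(\tfrac{P^t(x,y)}{\pi(y)}-1\bigr)^2\Bigr)^{1/2}\le\frac{|\lambda_1|^t}{\sqrt{\pi(x)}}.
\end{equation*}
As the right-hand side decreases in $t$, it suffices to check $D_x(t)<\e$ at $t=\frac1{\Gap(\M)}\log\frac1{\pi(x)\e}$, which follows from $|\lambda_1|^t\le e^{-t\,\Gap(\M)}$ (i.e.\ $-\log|\lambda_1|\ge\Gap(\M)$) together with $\sqrt{\pi(x)}\le1$; applied with the least-likely state (the one selected by the maximum in the definition of $\tau(\e)$) this gives the stated upper bound, up to the usual rounding of the threshold to an integer.
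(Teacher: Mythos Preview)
The paper does not prove this lemma at all: it is quoted as a standard fact with a citation to \cite{Randall2006}. So there is no ``paper's own proof'' to compare against; what you have supplied is essentially the textbook argument (eigenfunction test for the lower bound, $L^2$ spectral expansion plus Cauchy--Schwarz for the upper bound), and it is correct.

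Two small remarks. First, you are right to flag that the upper bound genuinely needs reversibility for the diagonalization step; the paper's statement omits this hypothesis, but every chain the paper actually analyzes is reversible, so no harm is done. Second, your closing parenthetical --- that the least-likely state is ``the one selected by the maximum in the definition of $\tau(\e)$'' --- is not quite accurate: the starting state maximizing total-variation distance need not be the one of minimum stationary mass. What your argument actually shows is $D_x(t)<\e$ for every $x$ once $t\ge\Gap(\M)^{-1}\log\bigl(1/(\pi_{\min}\e)\bigr)$, which yields the stated bound with $x$ taken to be a state of minimum $\pi$-mass. The lemma as written in the paper, asserting the upper bound simultaneously for \emph{every} $x\in\Omega$, is strictly speaking too strong (the right-hand side decreases as $\pi(x)$ grows while $\tau(\e)$ is fixed); the intended and usable statement is the $\pi_{\min}$ version, which is exactly what your proof delivers.
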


We say that a Markov chain $\M$, whose state space depends on a variable $n \in \N$, is \emph{rapidly mixing} if $\tau(\epsilon)$ (and equivalently, $\tau_{rel}(\M)$) is bounded above by some polynomial in $n$ and $\log(\e^{-1})$. 
%From the above theorem, $\M$ is also rapidly mixing if $\tau_{rel}(\M)$ is bounded by a polynomial in $n$.

%%%%%%%%%%%%%%%%%%%%%%%%%%%%%%%%%%%%%%%%%%%%%%%%%%%%
\subsection{Mixing Machinery}
There are a variety of different techniques that one may use place bounds on the mixing time or spectral gap of a Markov chain. In this section, we will summarize the main ones that we will use in this paper.

\subsubsection{Coupling}
A \emph{coupling} of a Markov chain $\M$ on $\Omega$ is a chain $(X_t, Y_t)_{t=0}^\infty$ on $\Omega \times \Omega$ for which the following properties hold.
\begin{enumerate}
    \item Each chain $(X_t)_{t=0}^\infty$ and $(Y_t)_{t=0}^\infty$, when viewed in isolation, is a copy of $\M$ (given initial states $X_0 = x$ and $Y_0 = y$).
    \item Whenever $X_t = Y_t$, we have $X_{t+1} = Y_{t+1}$.
\end{enumerate}
We define the \emph{coupling time} $T$ to be
\begin{equation}
    T = \max_{x,y \in \Omega} \E \left[ \min\{t \colon X_t = Y_t \mid X_0 = x,\, Y_0 = y\} \right]
\end{equation}
Coupling time and mixing time are then related by the following theorem \cite{Randall2006}.
\begin{theorem} \label{coupling-time-mixing-time-ineq}
$\tau(\e) \leq \ceil{Te \log \e^{-1}}$.
\end{theorem}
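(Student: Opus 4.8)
The plan is to derive the bound from the classical \emph{coupling inequality} together with a restart argument that converts the polynomial tail estimate supplied by Markov's inequality into a geometric one. Let $(X_t,Y_t)_{t\ge 0}$ be a coupling of $\M$ with coupling time $T$, and for $x,y\in\Omega$ write $\mathcal T_{x,y}=\min\{t:X_t=Y_t\}$ for the meeting time when $X_0=x$, $Y_0=y$, so that $T=\max_{x,y}\E[\mathcal T_{x,y}]$. Running the coupling with $X_0=x$ fixed and $Y_0$ drawn from the stationary distribution $\pi$, we have $Y_t\sim\pi$ for every $t$, so $(X_t,Y_t)$ is a coupling of the law $P^t(x,\cdot)$ with $\pi$; since two random variables that coincide with probability at least $1-p$ have laws within total variation distance $p$, the coupling inequality gives
\[
\tfrac12\sum_{y\in\Omega}\bigl|P^t(x,y)-\pi(y)\bigr|\ \le\ \Pr[X_t\ne Y_t]\ \le\ \max_{x',y'}\Pr[\mathcal T_{x',y'}>t]\ \le\ \frac{T}{t},
\]
the last step being Markov's inequality. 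By itself this only yields $\tau(\e)\le T/\e$; the next step upgrades the $1/t$ decay to exponential decay, which is the source of the $\log\e^{-1}$.

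Partition time into blocks of length $t_0=\lceil eT\rceil$. Since the coupling is itself a time-homogeneous Markov chain on $\Omega\times\Omega$ and coupled chains stay together once they meet, the Markov property shows that, conditioned on the chains not having met before the start of a block, they still fail to meet within that block with probability at most $\max_{x',y'}\Pr[\mathcal T_{x',y'}>t_0]\le T/t_0\le 1/e$. Multiplying over $k$ consecutive blocks, $\Pr[X_s\ne Y_s]\le e^{-k}$ for every $s\ge k t_0$, hence by the coupling inequality $\tfrac12\sum_{y}|P^s(x,y)-\pi(y)|\le e^{-k}$ for all $x$ and all $s\ge k t_0$. Choosing $k=\lceil\log\e^{-1}\rceil$ makes the right-hand side below $\e$, so $\tau(\e)\le t_0\lceil\log\e^{-1}\rceil$, a bound of the asserted shape $O(T\log\e^{-1})$.

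To sharpen the constant to the stated $\lceil Te\log\e^{-1}\rceil$, one does not fix a single block length but optimizes the decomposition: the same restart reasoning gives $\Pr[\mathcal T_{x,y}>s]\le\prod_i\min\{1,T/b_i\}$ for any decomposition $s=\sum_i b_i$ into positive integers, and the idealized choice of $m=s/(eT)$ blocks each of length $eT$ yields the bound $e^{-s/(eT)}$, which is below $\e$ exactly when $s>eT\log\e^{-1}$; replacing the idealized lengths by nearby integers and absorbing the resulting error then gives the stated bound. I expect the only real work to lie in this last step — the integrality bookkeeping for the precise constant, together with a careful justification of the restart principle, namely that the single number $T$ bounds the conditional tail of the meeting time from \emph{every} pair of states that can be reached partway through the run. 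The coupling inequality itself, and the passage from a tail bound on $\mathcal T$ to a bound on $\tau(\e)$, are routine.
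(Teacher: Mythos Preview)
Your argument is the standard one and is correct: the coupling inequality gives the total variation bound in terms of the meeting-time tail, Markov's inequality converts $T$ into a $1/e$ tail bound over a block of length $\lceil eT\rceil$, and the Markovian restart turns this into geometric decay. The only delicate points, as you note, are (i) that the restart step needs the coupling to be a time-homogeneous Markov chain on $\Omega\times\Omega$ so that the conditional tail from any reached pair is again bounded by $T$, and (ii) the integrality bookkeeping to land exactly on $\lceil Te\log\e^{-1}\rceil$ rather than $\lceil eT\rceil\lceil\log\e^{-1}\rceil$.

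However, there is nothing to compare against: the paper does not prove this theorem. It is quoted as a known result from the literature (the citation \cite{Randall2006} follows the statement), and is used as a black box in Lemma~\ref{log-concave-mixing} and Lemma~\ref{chain2}. Your write-up supplies a proof where the paper simply imports one; the argument you give is essentially the one found in standard references such as Aldous's coupling lemma or the survey you are implicitly citing.
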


\subsubsection{Decomposition}
\label{decomposition}
We use two disjoint decomposition methods for bounding the spectral gap, one developed by Martin and Randall \cite{Martin2000SamplingAS}, and a very recent one given by Hermon and Salez \cite{hermon2019modified}, building on the work by Jerrum, Son, Tetali and Vigoda \cite{JSTV04}. 
We use both theorems because, while the latter gives better bounds, the former has more relaxed conditions, which is necessary in one of our applications. 
The setup for both methods is the same. 

Let $\M$ be an ergodic, reversible Markov chain over a state space $\Omega$ with transition matrix $P$ and stationary distribution $\pi$. Suppose $\Omega$ can be partitioned into disjoint subsets $\Omega_1, \ldots, \Omega_m$. For each $i \in [m]$, let $\M_i$ be the \emph{restriction} of $\M$ to $\Omega_i$, which is obtained by rejecting any transition that would leave $\Omega_i$. Let $P_i$ be the transition matrix of $\M_i$ Additionally, we define $\proj{\M}$ to be the \emph{projection chain} of $\M$ over the state space $[m]$ as follows. Let the transition matrix $\proj{P}$ of $\proj{\M}$ be given by
\begin{equation}
    \proj{P}(i,j) = \frac{1}{\pi(\Omega_i)}\sum_{\substack{x \in \Omega_i \\ y \in \Omega_j}} \pi(x) P(x,y).
\end{equation}
One can check that $\proj{\M}$ is reversible and has stationary distribution 
$$\proj{\pi}(i) = \pi(\Omega_i),$$ 
while each $\M_i$ has stationary distribution 
$$\pi_i(x) = \frac{\pi(x)}{\proj{\pi}(i)}.$$
With this notation, we have the following theorem by Martin and Randall \cite{Martin2000SamplingAS}.
\begin{theorem}\label{decomp-theorem-randall}
Defining $P_{i}$ and $\proj{P}$ as above, we have
\begin{equation}
    \Gap(P) \geq \frac{1}{2} \Gap(\proj{P}) \min_{i \in [m]} \Gap(P_i).
\end{equation}
\end{theorem}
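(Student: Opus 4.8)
The plan is to reason directly from the variational characterization of the spectral gap recalled above. For a reversible chain $Q$ on a space $S$ with stationary distribution $\nu$, write
\[
\mathcal{E}_Q(g)=\sum_{x,y\in S}|g(x)-g(y)|^2\nu(x)Q(x,y),\qquad \mathrm{Var}_\nu(g)=\sum_{x,y\in S}|g(x)-g(y)|^2\nu(x)\nu(y);
\]
the displayed formula for $\Gap$ says precisely that $\mathrm{Var}_\nu(g)\le\Gap(Q)^{-1}\mathcal{E}_Q(g)$ for every $g$. So, fixing an arbitrary nonconstant $f\colon\Omega\to\R$, it suffices to show
\[
\mathrm{Var}_\pi(f)\ \le\ \frac{2}{\Gap(\proj{P})\cdot\min_{i}\Gap(P_i)}\ \mathcal{E}_P(f);
\]
we may assume each $P_i$ and $\proj{P}$ are ergodic, since otherwise the right-hand side of the theorem is $0$.

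The first step is to split the variance across the partition. Let $\beta(x)\in[m]$ denote the block containing $x$, and let $\bar f\colon[m]\to\R$, $\bar f(i)=\sum_{x\in\Omega_i}\pi_i(x)f(x)$, be the vector of block averages. Expanding $\E_\pi[f^2]$ block by block and using $\proj{\pi}(i)\pi_i(x)=\pi(x)$ gives the law of total variance
\[
\mathrm{Var}_\pi(f)=\mathrm{Var}_{\proj{\pi}}(\bar f)+\sum_{i=1}^m\proj{\pi}(i)\,\mathrm{Var}_{\pi_i}(f),
\]
so it is enough to bound each summand on the right by a constant multiple of $\bigl(\Gap(\proj{P})\min_i\Gap(P_i)\bigr)^{-1}\mathcal{E}_P(f)$. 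For the within-block term, the gap inequality applied inside $\Omega_i$ gives $\mathrm{Var}_{\pi_i}(f)\le\Gap(P_i)^{-1}\mathcal{E}_{P_i}(f)$; since the restriction $P_i$ agrees with $P$ off the diagonal, $\proj{\pi}(i)\mathcal{E}_{P_i}(f)$ is exactly the portion of $\mathcal{E}_P(f)$ carried by transitions internal to $\Omega_i$, so summing over $i$ bounds $\sum_i\proj{\pi}(i)\mathrm{Var}_{\pi_i}(f)$ by $\bigl(\min_i\Gap(P_i)\bigr)^{-1}\mathcal{E}_P(f)$.

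For the between-block term I would apply the gap inequality to the projection chain, $\mathrm{Var}_{\proj{\pi}}(\bar f)\le\Gap(\proj{P})^{-1}\mathcal{E}_{\proj{P}}(\bar f)$, and then relate $\mathcal{E}_{\proj{P}}(\bar f)$ back to $\mathcal{E}_P(f)$. Using $\proj{\pi}(i)\proj{P}(i,j)=\sum_{x\in\Omega_i,\,y\in\Omega_j}\pi(x)P(x,y)$, the quantity $\mathcal{E}_{\proj{P}}(\bar f)$ becomes a sum of $|\bar f(\beta(x))-\bar f(\beta(y))|^2\pi(x)P(x,y)$ over cross-block transitions $x\to y$; writing $\bar f(\beta(x))-\bar f(\beta(y))=(\bar f(\beta(x))-f(x))+(f(x)-f(y))+(f(y)-\bar f(\beta(y)))$ and applying a Cauchy--Schwarz split bounds each term by $|f(x)-f(y)|^2$ plus the two ``centering errors'' $|f(x)-\bar f(\beta(x))|^2$ and $|f(y)-\bar f(\beta(y))|^2$. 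The $|f(x)-f(y)|^2$ contributions sum to a multiple of $\mathcal{E}_P(f)$; the centering errors sum, using $\sum_y P(x,y)\le1$ together with the reversibility of $P$ to symmetrize, to a multiple of $\sum_i\proj{\pi}(i)\mathrm{Var}_{\pi_i}(f)$, which is already controlled by the previous step. Feeding all of this back into the total-variance identity, together with $\Gap(\proj{P}),\Gap(P_i)\le1$, produces a bound of the required shape.

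The step I expect to be delicate is the bookkeeping of constants needed to bring the final factor down to exactly $2$: the crude triangle-inequality split above produces three terms and a wasteful absolute constant, so one must be economical. One convenient organization is a case distinction: if $\sum_i\proj{\pi}(i)\mathrm{Var}_{\pi_i}(f)\ge\tfrac12\mathrm{Var}_\pi(f)$, then the within-block bound alone already gives $\mathrm{Var}_\pi(f)\le2\bigl(\min_i\Gap(P_i)\bigr)^{-1}\mathcal{E}_P(f)$, which suffices since $\Gap(\proj{P})\le1$; the remaining case, where $\mathrm{Var}_{\proj{\pi}}(\bar f)$ carries the larger share of the variance, is the one that forces the careful version of the cross-term comparison (optimizing the free parameter in the Cauchy--Schwarz split, and charging the centering errors against the within-block variances with their strictly smaller weight). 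Everything else is routine manipulation of Dirichlet forms, and the hypotheses that $\M$ is ergodic and reversible are exactly what makes the restrictions and projection well-defined and legitimizes the symmetrization used in the cross-term estimate.
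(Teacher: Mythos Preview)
The paper does not prove this theorem: it is quoted as a result of Martin and Randall \cite{Martin2000SamplingAS} and used as a black box in the proof of Theorem~\ref{main-mixing}. There is therefore no ``paper's own proof'' to compare your attempt against.

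As for your sketch on its own merits: the variational/Dirichlet-form route you outline---law of total variance, Poincar\'e inequality inside each block, Poincar\'e inequality for the projection applied to the block-averaged function, and a Cauchy--Schwarz comparison of $\mathcal{E}_{\proj{P}}(\bar f)$ with $\mathcal{E}_P(f)$---is exactly the standard framework used in \cite{JSTV04} and related work, and the individual inequalities you list are all valid. The honest caveat you raise is the real issue: the crude three-term split $(a+b+c)^2\le 3(a^2+b^2+c^2)$ loses too much, and your case distinction only disposes of the easy half (within-block variance dominates). In the remaining case you still need to show that the centering-error contribution, after being bounded by $\sum_i\proj{\pi}(i)\mathrm{Var}_{\pi_i}(f)$, can be absorbed without overshooting the constant $2$; as written, the sketch does not close that loop. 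If your goal were to supply a self-contained proof for the paper, you would need either to carry the optimized Cauchy--Schwarz split through explicitly or to follow Martin and Randall's original argument, which organizes the comparison somewhat differently. For the purposes of this paper, however, citing \cite{Martin2000SamplingAS} is all that is required.
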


The theorem due to Hermon and Salez obtains better bounds if, for each pair $(i, j) \in [m] \times [m]$ with $\proj{P}(i,j) > 0$, we can find an effective coupling $\kappa_{ij} \colon \Omega_i \times \Omega_j \to [0,1]$ of the distributions $\pi_i$ and $\pi_j$. In other words, we must have
\begin{align}
	\forall x \in \Omega_i, \qquad \sum_{y \in \Omega_j} \kappa_{ij}(x,y) &= \pi_i(x), \\
	\forall y \in \Omega_j, \qquad \sum_{x \in \Omega_i} \kappa_{ij}(x,y) &= \pi_j(y).
\end{align}
The \emph{quality} of the coupling is defined as
\begin{equation}
\chi = \min\left\{
	\frac{\pi(x)P(x,y)}{\proj{\pi}(i)\proj{P}(i,j)\kappa_{ij}(x,y)}
\right\},
\end{equation}
where the minimum is taken over all $(x, y, i, j)$ for which $\proj{P}(i,j) > 0$ and $\kappa_{ij}(x,y) > 0$. Hermon and Salez \cite{hermon2019modified} prove the following.

\begin{theorem}\label{decomp-theorem-hermon}
With $P$, $\bar{P}$, $P_{i}$, and $\chi $ defined as above,
\begin{equation}
    \Gap(P) \geq \min\left\{ \chi \Gap(\proj{P}), \min_{i \in [m]} \Gap(P_i) \right\}.
\end{equation}
\end{theorem}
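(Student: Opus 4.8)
The plan is to argue directly from the variational (Dirichlet form) characterization of the spectral gap recorded above: for a reversible chain with transition matrix $Q$ and stationary distribution $\mu$, write $\mathcal{E}_Q(f) := \sum_{x,y}|f(x)-f(y)|^2\mu(x)Q(x,y)$ and $\mathrm{Var}_\mu(f) := \sum_{x,y}|f(x)-f(y)|^2\mu(x)\mu(y)$ for the numerator and denominator appearing there, so that $\Gap(Q) = \inf_f \mathcal{E}_Q(f)/\mathrm{Var}_\mu(f)$. Set $\gamma := \min\{\chi\,\Gap(\proj{P}),\ \min_{i}\Gap(P_i)\}$ (we may assume $\chi>0$, else the claim is trivial). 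Fixing an arbitrary non-constant $f\colon\Omega\to\R$, it suffices to show $\mathcal{E}_P(f) \ge \gamma\,\mathrm{Var}_\pi(f)$. First I would split both sides along the partition: write $\mathcal{E}_P(f) = \mathcal{E}^{\mathrm{in}} + \mathcal{E}^{\mathrm{out}}$, the sums over pairs $(x,y)$ lying in a common block, respectively in two distinct blocks (both nonnegative), while the law of total variance gives the exact identity $\mathrm{Var}_\pi(f) = \sum_i \proj{\pi}(i)\,\mathrm{Var}_{\pi_i}(f) + \mathrm{Var}_{\proj{\pi}}(\bar f)$, with $\bar f(i) := \sum_{x\in\Omega_i}\pi_i(x)f(x)$ the block-average of $f$. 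The goal becomes to bound the within-block sum by $\gamma^{-1}\mathcal{E}^{\mathrm{in}}$ and the between-block term by $\gamma^{-1}\mathcal{E}^{\mathrm{out}}$, then add.

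For the within-block sum, forming the restriction $\M_i$ only modifies holding probabilities, so $\pi(x)P(x,y) = \proj{\pi}(i)\,\pi_i(x)P_i(x,y)$ for all $x\ne y$ in $\Omega_i$; hence $\mathcal{E}^{\mathrm{in}} = \sum_i \proj{\pi}(i)\,\mathcal{E}_{P_i}(f) \ge \sum_i \proj{\pi}(i)\,\Gap(P_i)\,\mathrm{Var}_{\pi_i}(f) \ge \gamma\sum_i \proj{\pi}(i)\,\mathrm{Var}_{\pi_i}(f)$, applying the variational bound for each $\M_i$. For the between-block term---the heart of the argument---fix $i\ne j$ with $\proj{P}(i,j)>0$. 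Since $\kappa_{ij}$ is a coupling of $\pi_i$ and $\pi_j$, marginalizing gives $\bar f(i)-\bar f(j) = \sum_{x\in\Omega_i,\,y\in\Omega_j}\kappa_{ij}(x,y)\big(f(x)-f(y)\big)$, and as $\kappa_{ij}$ is a probability measure, convexity of $t\mapsto t^2$ yields $|\bar f(i)-\bar f(j)|^2 \le \sum_{x,y}\kappa_{ij}(x,y)\,|f(x)-f(y)|^2$. Multiplying by $\proj{\pi}(i)\proj{P}(i,j)$, summing over such pairs, and using $\chi$ in the form $\proj{\pi}(i)\proj{P}(i,j)\kappa_{ij}(x,y)\le\chi^{-1}\pi(x)P(x,y)$, we get $\mathcal{E}_{\proj{P}}(\bar f)\le\chi^{-1}\mathcal{E}^{\mathrm{out}}$---and only cross-block pairs $(x,y)$ occur, so it is $\mathcal{E}^{\mathrm{out}}$, not the full $\mathcal{E}_P(f)$, that appears. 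Composing with the variational bound for $\proj{\M}$ gives $\mathrm{Var}_{\proj{\pi}}(\bar f)\le\Gap(\proj{P})^{-1}\mathcal{E}_{\proj{P}}(\bar f)\le(\chi\,\Gap(\proj{P}))^{-1}\mathcal{E}^{\mathrm{out}}\le\gamma^{-1}\mathcal{E}^{\mathrm{out}}$. Adding the two estimates yields $\mathrm{Var}_\pi(f)\le\gamma^{-1}(\mathcal{E}^{\mathrm{in}}+\mathcal{E}^{\mathrm{out}}) = \gamma^{-1}\mathcal{E}_P(f)$, i.e. $\Gap(P)\ge\gamma$.

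The step I expect to be the crux is exactly the one that produces the clean minimum rather than the constant-factor-weaker bound of Theorem \ref{decomp-theorem-randall}: namely the observation that each $\kappa_{ij}$ is supported on pairs $(x,y)\in\Omega_i\times\Omega_j$ with $P(x,y)>0$ (this last because $\chi>0$), so these are genuine cross-block transitions; thus the between-block budget draws on $\mathcal{E}^{\mathrm{out}}$ and the within-block budget on $\mathcal{E}^{\mathrm{in}}$, and no transition of $P$ is ever charged twice. The remaining points are routine but should be verified: that each $\M_i$ and $\proj{\M}$ are ergodic and reversible (reversibility descends from $\M$; ergodicity is part of the standing setup) so that their variational characterizations apply; that the variance estimates hold trivially ($0=0$) on blocks where $f$ is constant; and that $\proj{P}(i,j)=0$ forces $P(x,y)=0$ for all $x\in\Omega_i,\ y\in\Omega_j$ (since $\pi$ is everywhere positive), which is what lets the sum over pairs with $\proj{P}(i,j)>0$ recover all of $\mathcal{E}^{\mathrm{out}}$.
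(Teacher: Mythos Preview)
The paper does not prove this theorem; it is quoted as a black-box result from Hermon and Salez \cite{hermon2019modified} (building on \cite{JSTV04}). Your argument is nonetheless correct, and is essentially the standard proof: decompose the variance along the partition via the law of total variance, decompose the Dirichlet form into within-block and cross-block parts, bound the within-block variances by the restriction Dirichlet forms using $\Gap(P_i)$, and bound the between-block variance by the cross-block Dirichlet form using Jensen on each $\kappa_{ij}$ together with the definition of $\chi$ and $\Gap(\proj{P})$. Your identification of the crux---that whenever $\chi>0$ the couplings are supported on genuine cross-block transitions, so $\mathcal{E}^{\mathrm{in}}$ and $\mathcal{E}^{\mathrm{out}}$ are charged disjointly and the bound comes out as a minimum rather than a product---is exactly the point.

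One small caveat worth recording: the paper's displayed variational formula computes $1-\lambda_1$ (the gap to the second-largest eigenvalue), whereas the paper \emph{defines} $\Gap(\M)=1-|\lambda_1|$ (the absolute spectral gap). These agree for lazy chains, which covers all the chains the paper actually analyzes, but your argument---like the variational formula itself---strictly bounds the former quantity. This is a discrepancy already present in the paper's own setup, not in your proof.
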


The utility of these decomposition theorems is that they allow us to break down a more complicated Markov chain into pieces that are easier to analyze. If we can show that the pieces rapidly mix, and the projection chain rapidly mixes, then we may conclude that the original chain rapidly mixes as well. 

Additionally, to aid with the analysis of some projection chains, we will need another lemma from \cite{Martin2000SamplingAS}.

Let $\M_M$ be the Markov chain on $[m]$ with Metropolis transitions 
$P_M(i, j) = \min\{1, \frac{\pi(\Omega_j)}{\pi(\Omega_i)}\}$ whenever $\proj{P}(i, j) > 0$. Let $\partial_i(\Omega_j) = \{y \in \Omega_j \colon \exists x \in \Omega_i \text{ with } P(x,y) > 0 \}$. Then we have the following

\begin{lemma}\label{proj-lemma}
    With $P_M$ as defined above, suppose there exist constants $a > 0$ and $b > 0$ with
    \begin{enumerate}
        \item $P(x,y) \geq a$ for all $x,y$ such that $P(x,y) > 0$.
        \item $\pi(\partial_i(\Omega_j)) \geq b\pi(\Omega_j)$ for all $i,j$ with $\proj{P}(i,j) > 0$.
    \end{enumerate}
    Then $\Gap(\proj{P}) \geq ab \cdot \Gap(P_M)$.
\end{lemma}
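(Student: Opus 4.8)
The plan is to use the functional (variational) characterization of the spectral gap to reduce the claim to a single edge-by-edge comparison between the transition rates of $\proj{P}$ and $P_M$ on the state space $[m]$. First I would record that $P_M$ is reversible with respect to $\proj{\pi}$, where $\proj{\pi}(i) = \pi(\Omega_i)$: indeed $\proj{\pi}(i)\,P_M(i,j) = \min\{\proj{\pi}(i),\proj{\pi}(j)\}$ is symmetric in $i$ and $j$, and since $\proj{\M}$ is reversible we have $\proj{P}(i,j) > 0$ iff $\proj{P}(j,i) > 0$, so $P_M(i,j)$ and $P_M(j,i)$ are defined on the same set of edges. Thus $\proj{P}$ and $P_M$ are two reversible ergodic chains on $[m]$ sharing the stationary distribution $\proj{\pi}$. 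Substituting both into the functional definition of $\Gap$, the denominators $\sum_{i,j}|f(i)-f(j)|^2\,\proj{\pi}(i)\,\proj{\pi}(j)$ agree, so, exactly as in the proof of Lemma~\ref{prob-comp-lemma} and because the $i=j$ terms contribute nothing to either numerator, it suffices to prove the per-edge estimate
\[
	\proj{\pi}(i)\,\proj{P}(i,j) \;\geq\; ab\,\proj{\pi}(i)\,P_M(i,j) \;=\; ab\min\{\proj{\pi}(i),\proj{\pi}(j)\}
\]
for every $i \neq j$ with $\proj{P}(i,j) > 0$ (when $\proj{P}(i,j)=0$, so is $P_M(i,j)$, and both sides vanish). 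Summing the resulting Dirichlet-form inequality and taking the infimum over nonconstant $f$ then gives $\Gap(\proj{P}) \geq ab\,\Gap(P_M)$.

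For the per-edge estimate, fix $i \neq j$ with $\proj{P}(i,j) > 0$ and expand using the definition of the projection chain:
\[
	\proj{\pi}(i)\,\proj{P}(i,j) \;=\; \sum_{x \in \Omega_i,\; y \in \Omega_j} \pi(x)\,P(x,y) \;\geq\; \sum_{y \in \partial_i(\Omega_j)}\; \sum_{x \in \Omega_i} \pi(x)\,P(x,y).
\]
For each $y \in \partial_i(\Omega_j)$ pick $x_y \in \Omega_i$ with $P(x_y,y) > 0$; reversibility of $P$ gives $P(y,x_y) > 0$, whence $P(y,x_y)\geq a$ by hypothesis~(1), and then $\pi(x_y)\,P(x_y,y) = \pi(y)\,P(y,x_y) \geq a\,\pi(y)$ again by reversibility. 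Hence
\[
	\proj{\pi}(i)\,\proj{P}(i,j) \;\geq\; \sum_{y \in \partial_i(\Omega_j)} a\,\pi(y) \;=\; a\,\pi\big(\partial_i(\Omega_j)\big) \;\geq\; ab\,\pi(\Omega_j) \;=\; ab\,\proj{\pi}(j),
\]
the last inequality being hypothesis~(2). Since $\proj{\pi}(j) \geq \min\{\proj{\pi}(i),\proj{\pi}(j)\}$, this is the desired bound, and it conveniently removes any need to split into cases according to which of $\proj{\pi}(i),\proj{\pi}(j)$ is smaller.

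I expect the middle step of the second paragraph to be the only delicate point: hypothesis~(1) lower-bounds $P(\cdot,\cdot)$ only on pairs that are genuinely connected, so one cannot directly bound $\sum_{x\in\Omega_i}\pi(x)P(x,y)$ from below in terms of masses inside $\Omega_i$; the fix is to transfer the estimate across each edge via reversibility so that it is expressed through $\pi(y)$ with $y\in\Omega_j$, which is precisely the form in which hypothesis~(2) can be applied. Everything else --- verifying detailed balance for $P_M$, matching the edge supports of $\proj{P}$ and $P_M$, and collapsing the Dirichlet forms to a per-edge inequality --- is routine bookkeeping.
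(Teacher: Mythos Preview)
Your argument is correct: the per-edge Dirichlet-form comparison
\[
\proj{\pi}(i)\,\proj{P}(i,j)\;=\;\sum_{y\in\partial_i(\Omega_j)}\sum_{x\in\Omega_i}\pi(x)P(x,y)\;\geq\;\sum_{y\in\partial_i(\Omega_j)}\pi(y)P(y,x_y)\;\geq\;a\,\pi(\partial_i(\Omega_j))\;\geq\;ab\,\proj{\pi}(j)\;\geq\;ab\,\proj{\pi}(i)P_M(i,j)
\]
is exactly the right computation, and your use of reversibility of $P$ to convert $\pi(x_y)P(x_y,y)$ into $\pi(y)P(y,x_y)$ so that hypothesis~(1) applies is the one nontrivial maneuver. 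Note that the paper does not supply its own proof of this lemma: it is quoted from Martin and Randall~\cite{Martin2000SamplingAS}, and the argument you give is essentially the standard comparison-of-conductances proof used there.
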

In order to help analyze the mixing time of $\M_M$, we will also require the following lemma.

\begin{lemma}\label{log-concave-mixing}
    Let $\pi $ be a probability distribution on $\left[ m \right] $.
    Let $\M$ be a Markov chain on $[m]$ with the transition probabilities
    \begin{equation}
        P(i,j) = 
        \begin{cases}
            \frac{1}{4}\min\left\{1, \frac{\pi(j)}{\pi(i)}\right\}& \textnormal{if } |i - j| = 1 \\
            0 & \textnormal{if } |i-j| > 1
        \end{cases}
    \end{equation}
    and the appropriate self-loop probabilities $P(i,i)$. If $\pi(i)$ is log concave in $i$, then $\M$ has mixing time (and hence also relaxation time) $\tau(\epsilon) = O(m^2/r)$\,,
    %has spectral gap $\Gap(\M) = \frac{1}{O(m^2/r)}$, 
    where $r = \min_{i\in[m]}\frac{\pi(i-1) + \pi(i+1)}{\pi(i)}$.
\end{lemma}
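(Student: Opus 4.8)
The plan is to bound the coupling time of a suitable coupling of $\M$ by $O(m^2/r)$ and then invoke Theorem~\ref{coupling-time-mixing-time-ineq}; the relaxation-time bound then follows from the lower inequality in Lemma~\ref{spectral-gap-mixing-time-ineq}. Write $p_i^+ = \min\{1,\pi(i+1)/\pi(i)\}$ and $p_i^- = \min\{1,\pi(i-1)/\pi(i)\}$ (with $\pi(0)=\pi(m+1)=0$), so $P(i,i\pm 1)=\tfrac14 p_i^\pm$, and set $g(i):=p_i^+-p_i^-$. First I would record three facts. (i) Log-concavity makes $i\mapsto\pi(i+1)/\pi(i)$ non-increasing, hence $p_i^+$ non-increasing and $p_i^-$ non-decreasing, so $g$ is non-increasing. (ii) Since $\min\{1,a\}+\min\{1,b\}\ge\min\{1,a+b\}$, for every $i$ one has $p_i^++p_i^-\ge\min\{1,\tfrac{\pi(i-1)+\pi(i+1)}{\pi(i)}\}\ge\min\{1,r\}$. (iii) Evaluating the ratio in the definition of $r$ at a mode of $\pi$ gives $r\le 2$, so $\min\{1,r\}\ge r/2$.

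The coupling: note $P=\tfrac12 P'+\tfrac12 I$, where $P'$ is the birth–death chain with $P'(i,i\pm1)=\tfrac12 p_i^\pm$ (a valid transition matrix because $p_i^++p_i^-\le 2$). Define $(X_t,Y_t)$ as follows: at each step flip a fair coin; if $X_t\ne Y_t$, with probability $\tfrac12$ let $X$ take one $P'$-step while $Y$ stays put, and with probability $\tfrac12$ let $Y$ take one $P'$-step while $X$ stays put; if $X_t=Y_t$, let both take the same $P'$-step (or both stay) using shared randomness. Each marginal then performs a $P'$-step with probability $\tfrac12$ and stays otherwise, i.e.\ is a faithful copy of $\M$, and $X_t=Y_t$ is absorbing, so this is a genuine coupling. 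It is moreover \emph{monotone}: if $X_t\le Y_t$ then $X_{t+1}\le Y_{t+1}$, since a single updated walker cannot overtake the other. Hence $D_t:=Y_t-X_t\ge 0$; for $X_t=x<y=Y_t$ a direct computation gives $\Pr(D_{t+1}=D_t-1\mid\mathcal F_t)=\tfrac14(p_x^++p_y^-)$ and $\Pr(D_{t+1}=D_t+1\mid\mathcal F_t)=\tfrac14(p_x^-+p_y^+)$, so $\E[D_{t+1}-D_t\mid\mathcal F_t]=-\tfrac14\big(g(x)-g(y)\big)\le 0$ by (i), while $\Pr(D_{t+1}\ne D_t\mid\mathcal F_t)=\tfrac14\big((p_x^++p_x^-)+(p_y^++p_y^-)\big)\ge\tfrac12\min\{1,r\}$ by (ii).

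Now run a Lyapunov argument on the gap with the concave function $f(d)=d(2m+1-d)$, which has $f(0)=0$ and is increasing on $\{0,\dots,m-1\}$. Using $f(d+\Delta)-f(d)=(2m+1-2d)\Delta-\Delta^2$, the bound $2m+1-2D_t\ge 3>0$, the supermartingale property of $D_t$, and $\E[\Delta^2\mid\mathcal F_t]=\Pr(D_{t+1}\ne D_t\mid\mathcal F_t)$, one gets $\E[f(D_{t+1})\mid\mathcal F_t]\le f(D_t)-\tfrac12\min\{1,r\}$ whenever $D_t\ge 1$. Optional stopping applied to $f(D_{t\wedge\tau})+\tfrac12\min\{1,r\}\,(t\wedge\tau)$, with $\tau=\inf\{t:D_t=0\}$, yields $\E[\tau\mid X_0=x,Y_0=y]\le \tfrac{2f(D_0)}{\min\{1,r\}}=O\big(m^2/\min\{1,r\}\big)=O(m^2/r)$, the last step using (iii). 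Thus the coupling time is $O(m^2/r)$, and Theorem~\ref{coupling-time-mixing-time-ineq} gives $\tau(\e)=O\big((m^2/r)\log\e^{-1}\big)$; the relaxation-time statement is then immediate from the lower inequality of Lemma~\ref{spectral-gap-mixing-time-ineq}.

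The step I expect to be the real point is the choice of coupling. The ``obvious'' monotone coupling that feeds both chains a common move-direction and a common acceptance threshold fails here: where $\pi$ is flat (in particular the uniform case) the two chains have identical acceptance probabilities and move in lockstep, so the gap never closes. The alternating-update coupling above remains monotone yet makes the gap both a supermartingale and genuinely non-lazy, at rate $\tfrac12\min\{1,r\}$ — the two ingredients the Lyapunov estimate needs. Checking monotonicity and the one-step law of $D_t$ is routine, but it is precisely there that log-concavity (through $g$ non-increasing, for the drift) and the elementary inequality $\min\{1,a\}+\min\{1,b\}\ge\min\{1,a+b\}$ (for the non-laziness, hence the factor $r$) come in.
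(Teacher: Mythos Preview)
Your argument is correct and follows essentially the same route as the paper: the identical alternating-update coupling (one chain moves while the other holds), log-concavity to make the signed gap a supermartingale, and then a variance/holding-probability lower bound to convert this into an $O(m^2/r)$ coupling-time estimate. The only differences are presentational: where the paper cites the supermartingale-plus-variance coupling theorem from \cite{Randall2006} as a black box, you carry out the corresponding Lyapunov/optional-stopping computation explicitly with $f(d)=d(2m+1-d)$, and your handling of the factor $r$ (via $\min\{1,a\}+\min\{1,b\}\ge\min\{1,a+b\}$ and $r\le 2$) is more careful than the paper's one-line identification of $1-\max_i P(i,i)$ with $r$.
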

\begin{proof}
We define a coupling $(X_t, Y_t)$ on $\M$ as follows. If $X_t \neq Y_t$, then at time step $t+1$, flip a fair coin.
\begin{itemize}
    \item If heads, set $Y_{t+1} = Y_t$. Let $l$ be either $1$ or $-1$, each with probability $1/2$. If possible, let $X_{t+1} = X_t + l$ with probability $\frac{1}{2}\min\left\{1, \frac{\pi(X_t + l)}{\pi(X_t)}\right\}$. Otherwise, let $X_{t+1} = X_t$.
    \item If tails, set $X_{t+1} = X_t$, and update $Y_{t+1}$ the same way as we did for $X_{t+1}$ in the previous case.
\end{itemize}

Now, suppose that for some $t$ we have $X_t = i$ and $Y_t = j$ for $i \neq j$. WLOG, assume that $i < j$. Then we have
$$
\E(|X_{t+1} - Y_{t+1}| - |X_{t} - Y_{t}|) = P(i,i+1) - P(i,i-1) - P(j,j+1) + P(j,j-1).
$$

By the log-concavity of $\pi(i)$, we have $P(i,i+1) \geq P(j,j+1)$ and $P(i,i-1) \leq P(j,j-1)$. Therefore, the expected change in $|X_t - Y_t|$ is always non-positive.
From the coupling theorem presented in \cite{Randall2006}, this implies that $\tau(\e) = O(\frac{m^2}{1 -\max_{i}P(i,i)}) = O(m^2/r)$. %Therefore, $\Gap(\M) = \frac{1}{O(m^2/r)}$.
\end{proof}

%%%%%%%%%%%%%%%%%%%%%%%%%%%%%%%%%%%%%%%%%%%%%%%%%%%%%%%%%%%%%%%%%%%%%%%%%%%%%%%%%%%%%%%%%%%%%%%%%%%%%%%%%%%%%%%%%%%%%%%%%%%%%%%%%%%
%   Our Markov Chain
%%%%%%%%%%%%%%%%%%%%%%%%%%%%%%%%%%%%%%%%%%%%%%%%%%%%%%%%%%%%%%%%%%%%%%%%%%%%%%%%%%%%%%%%%%%%%%%%%%%%%%%%%%%%%%%%%%%%%%%%%%%%%%%%%%%

\section{Our Markov Chain on $\Bimots_m$}
\label{our_chain}
We define a Markov chain $\M = X_0, X_1, X_2, \cdots$ on $\Bimots_m$ to sample 2-Motzkin paths as a representation of plane trees. Here, we use $m = n-1$ to denote the length of the 2-Motzkin paths corresponding to plane trees with $n$ edges.

We define each step of $\M$ as follows. First, pick a random element $l$ uniformly from $\{1, 2, 3, 4\}$. Now choose $y$ as follows.
\begin{itemize}
    \item If $l = 1$, pick a random pair of consecutive symbols in $X_t$, and call this pair $s$. If $s$ is $UD$ or $HH$, let $s'$ be either $UD$ or $HH$ with probabilities $\frac{1}{1 + e^{-\alpha}}$ and $\frac{e^{-\alpha}}{1 + e^{-\alpha}}$ respectively. Let $y$ be the string $X_t$ with $s$ replaced by $s'$. Otherwise, let $y = X_t$.
    \item If $l = 2$, pick $i$ uniformly from $\{1, \cdots, m\}$. If $X_t(i)$ is $H$ or $I$, choose a symbol $c$ to be either $H$ or $I$ with probabilities $\frac{e^{-\alpha}}{e^{-\alpha} + e^{-\beta}}$ and $\frac{e^{-\beta}}{e^{-\alpha} + e^{-\beta}}$ respectively. Let $y$ be the 2-Motzkin path given by changing the symbol in $X_t(j)$ to $c$. Otherwise, we let $y = X_t$.
    \item If $l = 3$, pick $i$ and $j$ each uniformly from $\{1, \cdots, m\}$. If each of $X_t(i)$ and $X_t(j)$ are either $U$ or $D$, let $y$ be the string $X_t$ with the symbols at indices $i$ and $j$ swapped. Otherwise, let $y = X_t$.
    \item If $l = 4$, pick a random pair of consecutive symbols in $X_t$, and call this pair $s$. If $s$ is of the form $ab$ or $ba$ for some $a \in \{U, D\}$ and $b \in \{H, I\}$, let $s'$ be the reverse of $s$, and let $y$ be the string $X_t$ with $s$ replaced by $s'$. Otherwise, let $y = X_t$.
\end{itemize}

If $y$ is a valid 2-Motzkin path, set $X_{t+1} = y$ with probability $\frac{1}{2}$. Otherwise, set $X_{t+1} = X_t$.

One can check that $\M$ is ergodic and reversible, with stationary distribution $\pi(x) = \frac{e^{-E(x)}}{Z}$, where $Z = \sum_{y \in \Bimots_n} e^{-E(y)}$.
Pseudocode implementing this chain may be found in Appendix \ref{pseudocode}.

%%%%%%%%%%%%%%%%%%%%%%%%%%%%%%%%%%%%%%%%%%%%%%%%%%%%%%%%%%%%%%%%%%%%%%%%%%%%%%%%%%%%%%%%%%%%%%%%%%%%%%%%%%%%%%%%%%%%%%%%%%%%%%%%%%%
%   Results
%%%%%%%%%%%%%%%%%%%%%%%%%%%%%%%%%%%%%%%%%%%%%%%%%%%%%%%%%%%%%%%%%%%%%%%%%%%%%%%%%%%%%%%%%%%%%%%%%%%%%%%%%%%%%%%%%%%%%%%%%%%%%%%%%%%
\section{Mixing Time Results}
\label{results}

Our main result is to prove the rapid mixing of the Markov chain defined in Section \ref{our_chain}.
Since this proof involves multiple decomposition steps, we provide an overview here.
The primary tools used in this proof are the two decomposition theorems presented in Section \ref{decomposition}.
We first partition the state space of all 2-Motzkin paths by the number of $U$s in the path. 
The projection chain from this first decomposition is linear and is proved to be rapidly mixing using a result of Martin and Randall \cite{Martin2000SamplingAS} (Lemma \ref{chain1}).
Each of the restriction chains are decomposed again, this time by the pattern of $H$ and $I$ symbols. 
The projection chains for this second decomposition are shown to be rapidly mixing by coupling (Lemma \ref{chain2}).
The restriction chains are decomposed a third time, this time according to the skeleton of $U$ and $D$ steps.
The projection chains for this third decomposition are shown to be rapidly mixing by comparison to the classic mountain valley moves chain on Dyck paths (Lemma \ref{chain3}).
This last set of restriction chains are found to be rapidly mixing by isomorphism to the chain consisting of adjacent transpositions on binary strings (Lemma \ref{chain4}).
Finally, starting from the most restricted chains, we use the decomposition theorems to obtain a bound on the spectral gap of the original chain (Theorem \ref{main-mixing}).

%%%%%%%%%%%%%%%%%%%%%%%%%%%%%%%%%%%%%%%%%%%%%%%%%%%%%%%%%%%%%%%%%%%%%
We now proceed with a formal presentation. We will use a series of decompositions of  $\mathcal{M}$. We will first decompose our state space $\Bimots_m$ into $S_0, \cdots, S_{\floor{m/2}}$, where $$S_k = \{x \in \Bimots_m \colon |x|_U = k\}.$$ 
Let $\M_k$ denote the Markov chain $\M$ restricted to the set $S_k$, and let $\proj{\M}$ be the projection chain over this decomposition as outlined for Theorem \ref{decomp-theorem-randall}.

\begin{figure}
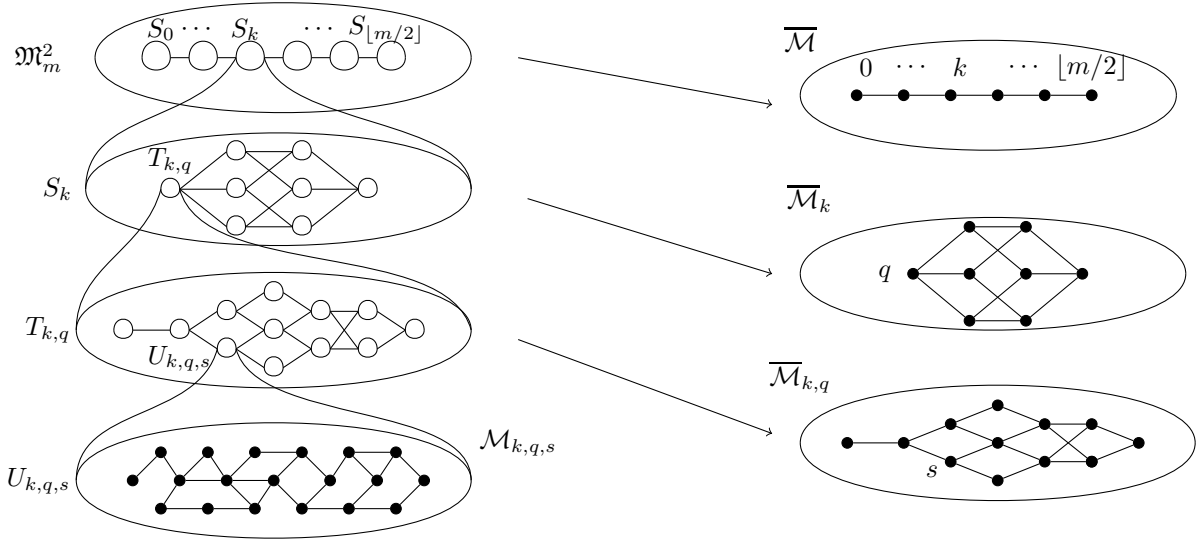

    \centering
    \tikzfig{decomposition}
    \caption{The four level decomposition of $\Bimots_m$ (left), and the projection chains corresponding to each decomposition (right).}
    \label{fig:fig2}
\end{figure}

Additionally, we will decompose each $S_k$ into the sets $\{T_{k,q} \colon q \in (H + I)^{m-2k}\}$, where $(H + I)^{m-2k}$ denotes the set of strings with length $m-2k$ from the alphabet $\{H, I\}$. We define $T_{k,q}$ to be the set of 2-Motzkin paths $x \in S_k$ such that the substring of $H$ and $I$ symbols in $x$ is $q$.
Let $\M_{k,q}$ denote the chain $\M_k$ restricted to $T_{k,q}$, and let $\proj{M}_k$ be the projection chain of $\M_k$ over this decomposition.

Finally, we decompose each $T_{k,q}$ into the partition $\{U_{k,q,s} \colon s \in \DPaths_k\}$ based on the skeletons of the 2-Motzkin paths. For each $s \in \DPaths_k$, we define 
$$U_{k,q,s} = \{x \in T_{k,q} \mid  \s(x) = s\}.$$
As before, we let $\M_{k, q, s}$ be the Markov chain $\M_{k, q}$ restricted to $U_{k,q,s}$, and let $\proj{M}_{k,q}$ be the appropriate projection chain. For clarity, this four-level decomposition is summarized in Figure \ref{fig:fig2}.
%%%%%%%%%%%%%%%%%%%%%%%%%%%%%%%%%%%%%%%%%%%%%%%%%%%%%%%%%%%%%%%%%%%%%%%%%%%%%%%%%%%%%%%%%%%%%%%%%%%%%%%%%%%%%%%%%%%%%%%%%%%%%%%%%%%%%%%%%%%%%%%%%%%
\begin{lemma} \label{chain1}
$\proj{\M}$ has relaxation time $\tau_{rel}(\proj{\M}) = O(m^4)$ .
\end{lemma}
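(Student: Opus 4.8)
The plan is to realize $\proj{\M}$ explicitly as a birth--death chain and then compare it to a Metropolis chain covered by Lemma~\ref{log-concave-mixing}. Since the only move of $\M$ that changes $|x|_U$ is the $l=1$ move, which changes $|x|_U$ by exactly $\pm1$, the chain $\proj{\M}$ lives on $\{0,1,\dots,\floor{m/2}\}$ with nonzero transitions only between neighbours. First I would write down its stationary distribution: on $S_k$ we have $|x|_H+|x|_I=m-2k$, so $e^{-E(x)}$ depends only on $k$ and $|x|_H$, and decomposing a path in $S_k$ as (a skeleton in $\DPaths_k$; a choice of which $2k$ of the $m$ positions carry skeleton steps; an $H/I$-colouring of the remaining $m-2k$ steps) gives
\[
\proj{\pi}(k)=\frac{w(S_k)}{Z},\qquad w(S_k)=e^{-\alpha(k+1)}\,(e^{-\alpha}+e^{-\beta})^{\,m-2k}\,C_k\binom{m}{2k}.
\]
The same decomposition shows that under $\pi_k:=\pi(\,\cdot\,\mid S_k)$ the skeleton and the interleaving are independent and uniform and the flat colours are i.i.d.\ with $\Pr[H]=p:=e^{-\alpha}/(e^{-\alpha}+e^{-\beta})$.

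Next I would compute the transition rates. Turning distinct $HH$ factors of $x\in S_k$ into $UD$ (which always yields a valid path in $S_{k+1}$) produces distinct neighbours, each reached with one-step probability $\tfrac{1}{8(m-1)(1+e^{-\alpha})}$, and likewise distinct $UD$ factors map to distinct neighbours in $S_{k-1}$, each with probability $\tfrac{e^{-\alpha}}{8(m-1)(1+e^{-\alpha})}$; hence $\proj{P}(k,k+1)=\tfrac{1}{8(m-1)(1+e^{-\alpha})}\E_{\pi_k}[\#HH]$ and $\proj{P}(k,k-1)=\tfrac{e^{-\alpha}}{8(m-1)(1+e^{-\alpha})}\E_{\pi_k}[\#UD]$. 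The crucial step is the two expectations: from the product structure of $\pi_k$, $\E_{\pi_k}[\#HH]=p^2\cdot\tfrac{(m-2k)(m-2k-1)}{m}$ (expected number of adjacent flat pairs, times $p^2$), while $\E_{\pi_k}[\#UD]=\tfrac{2k}{m}\cdot\tfrac{k+1}{2}=\tfrac{k(k+1)}{m}$, using that a given consecutive pair of skeleton steps is adjacent in $x$ with probability $\tfrac{2k}{m}$ and that a uniform element of $\DPaths_k$ has $\tfrac{k+1}{2}$ peaks on average (a standard Narayana-number fact). Thus, writing $C:=8(1+e^{-\alpha})$,
\[
\proj{P}(k,k+1)=\frac{p^{2}(m-2k)(m-2k-1)}{Cm(m-1)},\qquad \proj{P}(k,k-1)=\frac{e^{-\alpha}\,k(k+1)}{Cm(m-1)},
\]
which one checks is consistent with $\proj{\pi}(k)\proj{P}(k,k+1)=\proj{\pi}(k+1)\proj{P}(k+1,k)$.

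Then I would show $\proj{\pi}$ is log-concave in $k$: the factors $e^{-\alpha(k+1)}$ and $(e^{-\alpha}+e^{-\beta})^{m-2k}$ are log-linear, so it is enough that $C_k\binom{m}{2k}$ be log-concave, and indeed $\tfrac{(C_k\binom{m}{2k})^{2}}{C_{k-1}\binom{m}{2k-2}\,C_{k+1}\binom{m}{2k+2}}=\tfrac{(k+2)(m-2k+2)(m-2k+1)}{k(m-2k)(m-2k-1)}\ge 1$ for $1\le k\le\floor{m/2}-1$. Let $P^{\ast}$ be the chain of Lemma~\ref{log-concave-mixing} on the interval $\{0,\dots,\floor{m/2}\}$ with stationary distribution $\proj{\pi}$; log-concavity gives $\tau_{rel}(P^{\ast})=O(m^{2}/r)$ with $r=\min_k\tfrac{\proj{\pi}(k-1)+\proj{\pi}(k+1)}{\proj{\pi}(k)}$. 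Writing out $\proj{\pi}(k\pm1)/\proj{\pi}(k)$ — the two ratios have combinatorial part of order $1$ and reciprocal parameter parts — a short case analysis (AM--GM for bulk $k$; a direct estimate at $k\in\{0,1,\floor{m/2}-1,\floor{m/2}\}$, where one ratio is $\Theta(m^{2})$) shows $r$ is bounded below by a positive constant depending only on $\alpha,\beta$. Hence $\Gap(P^{\ast})=\Omega(1/m^{2})$.

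Finally I would invoke Lemma~\ref{prob-comp-lemma}. Splitting on the two cases of the $\min$ defining $P^{\ast}$ and using $(m-2k)(m-2k-1)\ge 2$ and $k(k+1)\ge 2$ on the relevant ranges, one verifies $P^{\ast}(k,k\pm1)\le c\,\proj{P}(k,k\pm1)$ and $P^{\ast}(k,k)\le c\,\proj{P}(k,k)$ for some $c=O(m^{2})$ depending only on $\alpha,\beta$; then $\Gap(\proj{P})\ge c^{-1}\Gap(P^{\ast})=\Omega(1/m^{4})$, i.e.\ $\tau_{rel}(\proj{\M})=O(m^{4})$. (Applying Lemma~\ref{proj-lemma} directly is too lossy here: its two constants would be only $a=\Theta(1/m^{2})$, forced by the $l=3$ move of $\M$, and $b=\Theta(1/m)$, which yields just $O(m^{5})$.) I expect the main obstacle to be the exact evaluation of $\E_{\pi_k}[\#HH]$ and $\E_{\pi_k}[\#UD]$ — in particular justifying the product structure of $\pi_k$ and getting the peak count right — since the log-concavity check, the lower bound on $r$, and the comparison constant $c$ all depend on having these transition probabilities in closed form; the boundary bookkeeping for $k$ near $0$ or $\floor{m/2}$ (where $S_{k\pm1}$ may be empty or consist only of Dyck paths) is a secondary nuisance.
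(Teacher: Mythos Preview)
Your argument is correct and arrives at the same $O(m^{4})$ bound, but by a different route than the paper. The paper applies Lemma~\ref{proj-lemma} directly: since the only moves of $\M$ changing $|x|_U$ are the $l=1$ moves, every cross--transition between $S_k$ and $S_{k\pm1}$ has probability at least $a=\Theta(1/m)$; then, by exhibiting explicit subsets $A_k\subset\partial_-(S_k)$ (paths whose first $D$ immediately follows a $U$) and $B_k\subset\partial_+(S_k)$ (paths whose flat block begins with $HH$) of relative weight $\Theta(1/m)$, it obtains $b=\Theta(1/m)$ and hence $\Gap(\proj{\M})\ge ab\cdot\Gap(\M_M)=\Omega(m^{-2})\cdot\Gap(\M_M)$, finishing with the same log-concavity check and Lemma~\ref{log-concave-mixing}. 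Your parenthetical dismissal of this route rests on a misreading: the constant $a$ in Lemma~\ref{proj-lemma} need only bound the transitions that cross between parts of the decomposition, and the $l=3$ move never leaves $S_k$; thus $a=\Theta(1/m)$, not $\Theta(1/m^{2})$, and Lemma~\ref{proj-lemma} does deliver $O(m^{4})$.

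What your approach buys is an explicit closed form for $\proj{P}(k,k\pm1)$, obtained via the product structure of $\pi_k$ and the Narayana peak count $\E[\#\text{peaks}]=(k+1)/2$; these computations are correct, as is the comparison with the Metropolis chain via Lemma~\ref{prob-comp-lemma} (in the case $\min=1$ you can use $u_k>1$ itself, which forces $(m-2k)(m-2k-1)\ge(k+1)(k+2)/A$, rather than the bare $\ge 2$). The paper's approach is shorter because it sidesteps computing $\E_{\pi_k}[\#HH]$ and $\E_{\pi_k}[\#UD]$ entirely, at the cost of the boundary-set estimates for $A_k,B_k$; both feed into the same endgame.
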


\begin{proof}
The chain $\proj{\M}$ is a linear chain with states $k$ in $\{0, \ldots, \floor{m/2}\}$, and with stationary distribution
\begin{align*}
    \proj{\pi}(k) &\propto \binom{m}{2k} C_k \cdot \sum_{i=0}^{m-2k} \binom{m-2k}{i} e^{-\beta i} e^{-\alpha (m-k-i)} 
    = \binom{m}{2k} C_k \cdot e^{-\alpha k} (e^{-\alpha} + e^{-\beta})^{m-2k}.
\end{align*}
Notice that transitions in $\M$ which move between the $S_k$ sets are those which change a $HH$ substring into a $UD$ or $DU$ substring, or vise versa. Thus, the transitions in $\proj{\M}$ only increase or decrease $k$ by at most 1. We seek to apply Lemma \ref{proj-lemma}. To choose $a$, notice that for $x \in S_k$ and $y \in S_{k\pm 1}$ with $P(x,y) > 0$, we have
$$P(x,y) = \frac{e^{E(x)}}{6(m-1)(e^{E(x)} + e^{E(y)})} = \frac{e^{E(x)}}{6(m-1)(e^{E(x)} + e^{E(x)\pm\alpha})} \geq \frac{1}{6(m-1)(1 + e^{|\alpha|})}.$$
Thus, we pick $a = \frac{1}{6(m-1)(1 + e^{|\alpha|})}$. 

To pick $b$, we let
$$\partial_-(S_k) = \{y \in S_k \colon \exists y \in S_{k-1}, P(x,y) > 0\}$$
for $k \in \{1, \cdots, \floor{m/2}\}$, and we let 
$$\partial_+(S_k) = \{y \in S_k \colon \exists y \in S_{k+1}, P(x,y) > 0\}$$
for $k \in \{0, \cdots, \floor{m/2}-1\}$.

Additionally, let $A_k$ for $k \in \{1, \cdots, \floor{m/2}\}$ be the subset of $S_k$ consisting of the 2-Motzkin paths in which the first $D$ symbol appears immediately after a $U$. Let $B_k$ for $k \in \{0, \cdots, \floor{m/2}-1\}$ be the subset of $S_k$ consisting of the 2-Motzkin paths in which a pair of adjacent $H$ symbols occurs before all other $H$ or $I$ symbols. It is easy to see that $A_k \subset \partial_-(S_k)$ and $B_k \subset \partial_+(S_k)$. Additionally, we have

$$
\pi(A_k) \propto \binom{m-1}{2k-1} C_k e^{-\alpha k} (e^{-\alpha} + e^{-\beta})^{m-2k},
$$
and 

$$
\pi(B_k) \propto \binom{m-1}{2k} C_k e^{-\alpha (k+2)} (e^{-\alpha} + e^{-\beta})^{m-2k-2}.
$$

Hence, we have
$$
\frac{\pi(\partial_-(S_k))}{\pi(S_k)} \geq \frac{\pi(A_k)}{\pi(S_k)} = \frac{2k}{m},
$$
and

$$
\frac{\pi(\partial_+(S_k))}{\pi(S_k)} \geq \frac{\pi(B_k)}{\pi(S_k)} = \frac{m-2k}{m}\left(\frac{e^{-\alpha}}{e^{-\alpha} + e^{-\beta}}\right)^2.
$$
Thus, we may let $b = \frac{1}{m}\left(\frac{e^{-\alpha}}{e^{-\alpha} + e^{-\beta}}\right)^2$.

Applying Lemma \ref{proj-lemma}, we get that $\Gap(\proj{\M}) \geq \frac{\Gap(\M_M)}{O(m^2)}$. 
Additionally, one can check that $\proj{\pi}(i)$ is log concave in $i$. Hence, using Lemma~\ref{log-concave-mixing}, we get 
$\tau_{rel}(\M_M) = O(m^2)$, and in turn $\tau_{rel}(\proj{\M}) = O(m^4)$\,, as claimed.
%$\Gap(\M_M) = \frac{1}{O(m^2)}$ by Lemma %\ref{log-concave-mixing}, so $\Gap(\proj{\M}) = %\frac{1}{O(m^4)}$ .
\end{proof}

%%%%%%%%%%%%%%%%%%%%%%%%%%%%%%%%%%%%%%%%%%%%%%%%%%%%%%%%%%%%%%%%%%%%%%%%%%%%%%%%%%%%%%%%%%%%%%%%%%%%%%%%%%%%%%%%%%%%%%%%%%%%%%%%%%%%%%%%%%%%%%%%%%%
\begin{lemma} \label{chain2}
    $\proj{\M}_{k}$ has mixing time $\tau(\proj{\M}_k) = {O(m \log m)}$, for all $k$.
    %spectral gap $\Gap(\proj{\M}_k) = \frac{1}{O(m \log m)}$ for all $k$.
\end{lemma}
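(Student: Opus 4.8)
The plan is to identify the projection chain $\proj{\M}_k$ explicitly and then bound its mixing time by a path-coupling–style argument. Recall that $\proj{\M}_k$ lives on the state space $(H+I)^{m-2k}$ of binary strings $q$ of length $m-2k$, where each state $q$ has stationary weight $\proj{\pi}_k(q) \propto e^{-\beta|q|_I}e^{-\alpha(k+|q|_H)}$; since the two positions of each symbol factor independently, $\proj{\pi}_k$ is a product measure, i.e.\ each coordinate is independently $H$ with probability $\frac{e^{-\alpha}}{e^{-\alpha}+e^{-\beta}}$ and $I$ with the complementary probability. First I would compute the transition probabilities of $\proj{\M}_k$ from the definition $\proj{P}_k(q,q') = \frac{1}{\pi(T_{k,q})}\sum_{x\in T_{k,q},\,y\in T_{k,q'}} \pi(x)P(x,y)$. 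The only moves of $\M$ that alter the $H/I$ pattern are the $l=2$ moves (Metropolis resampling of a single $H$ or $I$ symbol at a uniformly chosen index); the $l=1,3,4$ moves either fix $q$ or, in the case of $l=4$, permute where the $H/I$ block sits relative to the skeleton without changing the ordered substring $q$. Hence $\proj{\M}_k$ is essentially a single-site Metropolis (Glauber-type) dynamics on binary strings of length $N:=m-2k$ with a product stationary distribution, where a step picks a uniformly random coordinate $i\in\{1,\dots,m\}$, does nothing if that index is not one of the $N$ "$H/I$" slots (probability $1-N/m$ of a wasted pick), and otherwise resamples that coordinate from the correct conditional distribution, all further multiplied by the $\tfrac14$ (choice of $l$) and $\tfrac12$ (accept/reject) constants.

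Next I would bound the mixing time by coupling. Because the target is a product measure and the dynamics is single-site, the standard path coupling on the Hamming metric applies: couple two copies $X_t, Y_t$ by choosing the same coordinate index and, when that index is an $H/I$ slot, using the monotone (maximal) coupling of the two conditional resampling distributions — which here are \emph{identical} (the conditional law of a single coordinate does not depend on the other coordinates), so the coupled coordinate agrees after the update. Thus any disagreeing coordinate is killed the first time its slot index is selected, and no new disagreements are ever created, so the Hamming distance is non-increasing and strictly decreases in expectation. Each of the (at most $N\le m$) disagreeing coordinates is hit with probability $\Theta(1/m)$ per step (accounting for the $\tfrac14$, $\tfrac12$, and $1/m$ factors, each an absolute constant times $1/m$), so by a coupon-collector estimate the coupling time is $O(m\log m)$. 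Theorem \ref{coupling-time-mixing-time-ineq} then gives $\tau(\proj{\M}_k) = O(m\log m)$, uniformly in $k$, as claimed.

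The main thing requiring care — and the step I expect to be the real obstacle — is pinning down \emph{exactly} what the projection chain $\proj{\M}_k$ is, i.e.\ verifying that after projecting, the $l=4$ swap moves (which exchange an adjacent $U$-or-$D$ symbol with an adjacent $H$-or-$I$ symbol) contribute nothing beyond possible self-loops at the level of the ordered substring $q$, and that the $l=2$ moves do induce precisely the single-site Metropolis update described above with the stated constant. One must also handle the bookkeeping that the index $i$ in the $l=2$ move is chosen from $\{1,\dots,m\}$ rather than from the $N$ relevant slots, which only rescales the per-coordinate hitting rate by a constant and an $N/m\le 1$ factor — harmless for an $O(m\log m)$ bound but worth stating. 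Once the chain is correctly identified, the coupling argument is routine; alternatively, one could invoke a known $O(N\log N)$ mixing bound for Glauber dynamics on a product measure and then pad by the $O(m/N)$ slowdown from wasted index picks, but since $N\le m$ this still yields $O(m\log m)$.
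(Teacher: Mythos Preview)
Your proposal is correct and follows essentially the same route as the paper: identify $\proj{\M}_k$ as a single-site resampling chain on $(H+I)^{m-2k}$ with product stationary measure, couple two copies by using the same index and the same resampled symbol so that disagreements only disappear, and invoke coupon collector to get coupling time $O(m\log m)$. The paper's coupling is exactly the ``same index, same symbol'' coupling you describe, and your caution about verifying that the $l=4$ moves leave the ordered $H/I$ substring $q$ unchanged is well placed (the paper asserts this implicitly without comment).
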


\begin{proof}
Notice that $\proj{\M}_k$ appears as a chain with states $q$ in the set $Q = (H + I)^{m-2k}$. Additionally, transitions in $\proj{\M}_k$ only occur between strings in $Q$ that differ at only one index. 
The stationary distribution of $\proj{\M}_k$ is given by $\proj{\pi}_k(q) \propto e^{(\beta - \alpha)|q|_H}$.

Additionally, for $q_1, q_2 \in Q$ which differ at exactly one index, we have the transition probability
$$
\proj{P}_k(q_1, q_2) = 
    \begin{cases}
        \frac{e^{-\alpha}}{6m(e^{-\alpha} + e^{-\beta})} & \text{if }|q_2|_H = |q_1|_H + 1\\
        \frac{e^{-\beta}}{6m(e^{-\alpha} + e^{-\beta})} & \text{if }|q_2|_H = |q_1|_H - 1
    \end{cases} .
$$

We may show that $\proj{\M}_k$ rapidly mixes by a simple coupling argument. Let $(X_t, Y_t)_{t=0}^{\infty}$ be our coupled Markov chain on $Q\times Q$. We define one step in this coupled chain as follows.
\begin{enumerate}
    \item With probability $1 - \frac{m-2k}{6m}$, set $(X_{t+1}, Y_{t+1}) = (X_t, Y_t)$.
    \item Otherwise, pick a random index $j \in [m-2k]$. Let $a \in \{H, I\}$ be a random symbol such that $\Pr(a = H) = \frac{e^{-\alpha}}{e^{-\alpha} + e^{-\beta}}$ and $\Pr(a = I) = \frac{e^{-\beta}}{e^{-\alpha} + e^{-\beta}}$. Now let $X_{t+1}$ and $Y_{t+1}$ be $X_t$ and $Y_t$ respectively, each with the $j$th symbol changed to $a$.
\end{enumerate}
One can check that each of $(X_t)_t$ and $(Y_t)_t$ are indeed copies of $\proj{\M}_k$. Additionally, notice that we will have $X_t = Y_t$ after all $m - 2k$ possible indices $j$ have been updated. By the Coupon Collector Theorem, we have the coupling time of this chain to be $T_{\proj{\M}_k} = \frac{6m}{m-2k} \cdot O((m-2k) \log (m-2k)) = O(m \log m)$. Thus, using Theorem~\ref{coupling-time-mixing-time-ineq}, we have the mixing time (and the relaxation time) also $O(m \log m)$.
%Lemma~\ref{spectral-gap-mixing-time-ineq} we have $\Gap(\proj{\M}_k) = \frac{1}{O(m \log m)}$.
\end{proof}

%%%%%%%%%%%%%%%%%%%%%%%%%%%%%%%%%%%%%%%%%%%%%%%%%%%%%%%%%%%%%%%%%%%%%%%%%%%%%%%%%%%%%%%%%%%%%%%%%%%%%%%%%%%%%%%%%%%%%%%%%%%%%%%%%%%%%%%%%%%%%%%%%%%
\begin{lemma} \label{chain3}
    $\proj{\M}_{k, q}$ has relaxation time $\tau_{rel}(\M_{k, q}) = O(m^2)$\$, for all pairs $(k, q)$.
    %spectral gap $\Gap(\M_{k, q}) = \frac{1}{O(m^2)}$ for all pairs $(k, q)$.
\end{lemma}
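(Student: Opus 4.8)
The plan is to recognize $\proj{\M}_{k,q}$ as a Markov chain on $\DPaths_k$ with uniform stationary distribution whose transitions are, up to a polynomial slowdown, exactly those of the ``all transpositions'' chain on Dyck paths, and then to transfer the known relaxation-time bound for that chain using the comparison lemma (Lemma~\ref{prob-comp-lemma}).

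First I would nail down the structure of $\proj{\M}_{k,q}$. Every $x \in T_{k,q}$ has the same multiset of symbols, hence the same energy, so $\pi$ restricted to $T_{k,q}$ is uniform; and for each skeleton $s \in \DPaths_k$ an element of $U_{k,q,s}$ is specified by a choice of which $2k$ of the $m$ positions carry the symbols of $s$ in order, the remaining $m-2k$ positions carrying the symbols of $q$ in order --- and every such interleaving is automatically a valid $2$-Motzkin path, because inserting $H$'s and $I$'s never disturbs the prefix inequality on $U$'s and $D$'s. Thus $|U_{k,q,s}| = \binom{m}{2k}$ for every $s$, so $\proj{\pi}_{k,q}$ is uniform on $\DPaths_k$. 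For the transitions: of the four move types of $\M$, types $l=1$ and $l=2$ leave $T_{k,q}$ and are rejected (so they only create self-loops), type $l=4$ preserves the skeleton, and only type $l=3$ can change it. Since a single $l=3$ move transposes at most one $U$ with one $D$, for $s \neq s'$ one has $\proj{P}_{k,q}(s,s') > 0$ precisely when $s$ and $s'$ differ in exactly two coordinates, and in that case a direct count --- keeping track of the $\tfrac14$ probability of selecting $l=3$, the $\tfrac{2}{m^2}$ chance of hitting the relevant ordered pair of positions, the $\tfrac12$ acceptance probability, and the uniqueness of the reached state --- gives $\proj{P}_{k,q}(s,s') = \tfrac{1}{4m^2}$.

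Consequently, off the diagonal, $\proj{\M}_{k,q}$ coincides with the all (valid) transpositions chain on $\DPaths_k$, but with every rate rescaled to $\Theta(1/m^2)$ in place of the $\Theta(1/k^2)$ of the standard version. I would then invoke Lemma~\ref{prob-comp-lemma} with $\M_1$ the standard all-transpositions chain (also uniform on $\DPaths_k$, so the stationary-distribution hypothesis is satisfied) and $\M_2 = \proj{\M}_{k,q}$, choosing $c = \Theta(m^2/k^2)$ so that $P_1(s,s') \le c\, P_2(s,s')$ for every pair --- off the diagonal by the rate comparison, and on the diagonal because $P_2(s,s) \ge \tfrac12$ thanks to the always-rejected $l=1,2$ moves. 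This yields $\Gap(\proj{\M}_{k,q}) \ge \tfrac1c \Gap(\M_1) = \Theta(k^2/m^2)\,\Gap(\M_1)$; and since the all-transpositions chain on Dyck paths of length $2k$ has relaxation time $O(k^2)$ by \cite{CTY2015}, i.e.\ $\Gap(\M_1) = \Omega(1/k^2)$, we get $\Gap(\proj{\M}_{k,q}) = \Omega(1/m^2)$, hence $\tau_{rel}(\proj{\M}_{k,q}) = O(m^2)$. The degenerate cases with $|\DPaths_k| = 1$ (that is, $k \le 1$) are trivial.

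The hard part will be making the identification with the reference chain fully rigorous: verifying that $\proj{\M}_{k,q}$ connects exactly the pairs of Dyck paths at transposition-distance one and no others, computing $\proj{P}_{k,q}(s,s')$ exactly (being careful with ordered versus unordered index pairs, the $\tfrac14$ and $\tfrac12$ factors, and the uniqueness of the reached state), and checking the hypotheses of Lemma~\ref{prob-comp-lemma} --- in particular the matching uniform stationary distributions and the self-loop comparison. A minor secondary point is to state precisely which chain of \cite{CTY2015} is used and to confirm its $O(k^2)$ relaxation-time bound applies with the Dyck paths having $k$ up-steps.
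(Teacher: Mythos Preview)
Your proposal is correct and follows essentially the same approach as the paper: identify $\proj{\M}_{k,q}$ as a chain on $\DPaths_k$ with uniform stationary distribution and transposition moves at rate $\tfrac{1}{4m^2}$, then invoke Lemma~\ref{prob-comp-lemma} against a reference chain on Dyck paths with known $O(k^2)$ relaxation time. The only substantive difference is the choice of reference: you compare to the all-transpositions chain of \cite{CTY2015}, which is exactly $\proj{\M}_{k,q}$ up to a uniform speed factor, while the paper compares to what it calls the ``mountain valley'' chain and cites Cohen~\cite{Cohen2016}; since mountain-valley moves are a subset of transpositions, the comparison inequality holds in both cases and the resulting bound is the same.
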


\begin{proof}
    Notice that all $x \in T_{k,q}$ have equal energy, and that $|U_{k,q,s}| = \binom{m}{2k}$ for all $s$. Thus, $\proj{\M}_{k, q}$ has a uniform stationary distribution. If we represent each set $U_{k,q,s}$ by the Dyck path $s$, we can think of $\proj{\M}_{k, q}$ as a chain over $\DPaths_k$. Since all the transitions in $\M_{k, q}$ that move between the $U_{k,q,s}$ sets are moves that exchange the positions of a $U$ and a $D$, the transitions in $\proj{\M}_{k, q}$ are simply the moves on elements of $\DPaths_k$ which exchange a $U$ with a $D$. We call these moves on the elements of $\DPaths_k$, \emph{transposition moves}.

    For each $s_1, s_2 \in \DPaths_k$ that differ by a transposition move, the transition probabilities in our projection chain are given by

    \begin{align*}
        \proj{P}_{k,q}(s_1, s_2) &= \frac{1}{\pi(U_{k,q,s_1})}\sum_{\substack{x \in U_{k,q,s_1} \\ y \in U_{k,q,s_2}}} \pi(x) P(x,y) 
            = \frac{1}{|U_{k,q,s}|}\sum_{\substack{x \in U_{k,q,s_1} \\ y \in U_{k,q,s_2}}} P(x,y) \\
            &= \frac{1}{\binom{m}{2k}} \sum_{\substack{x,y \\ P(x,y) > 0}} \frac{1}{4m^2} 
            = \frac{1}{4m^{2} }.
    \end{align*}

    The last equality above relies on counting the number of terms in the sum. Notice that for each $x \in U_{k,q,s_1}$, there is a unique $y \in U_{k,q,s_2}$ for which $P(x,y) > 0$. Therefore, the number of terms is simply $|U_{k,q,s_1}| = \binom{m}{2k}$.
    Compare this chain to the traditional mountain valley Markov chain on $\DPaths_k$, which we will denote by $\M'$. The transition probabilities of $\M'$ are given by $P'(s_1,s_2) = \frac{1}{k^2}$ for each pair $(s_1,s_2)$ which differ by a mountain-valley move. It is known from Cohen \cite{Cohen2016} that $\Gap(\M') = \frac{1}{O(k^2)}$. Thus, applying Lemma \ref{prob-comp-lemma} to $\proj{\M}_{k,q}$ and $\M'$, we see that $\Gap(\proj{\M}_{k,q}) = \frac{1}{O(m^2)}$.
\end{proof}

%%%%%%%%%%%%%%%%%%%%%%%%%%%%%%%%%%%%%%%%%%%%%%%%%%%%%%%%%%%%%%%%%%%%%%%%
\begin{lemma} \label{chain4}
    $\M_{k, q, s}$ has relaxation time $\tau_{rel}(\M_{k, q, s}) = {O(m^3)}$\,, for all valid triples $(k, q, s)$.
    %spectral gap $\Gap(\M_{k, q, s}) = \frac{1}{O(m^3)}$ for all valid triples $(k, q, s)$.
\end{lemma}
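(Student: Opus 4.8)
The plan is to recognize $\M_{k,q,s}$, after relabeling, as the standard single-adjacent-transposition chain on binary strings --- whose relaxation time is known --- and to transfer that bound through a one-line comparison. The first task is to identify which transitions of $\M$ are internal to $U_{k,q,s}$. A move with $l=1$ changes $|x|_U$ and so leaves $S_k$; a move with $l=2$ changes the subsequence of $H$ and $I$ symbols and so leaves $T_{k,q}$; a move with $l=3$ either swaps two equal symbols (a self-loop) or alters the skeleton $\s(x)$ and so leaves $U_{k,q,s}$. Hence only moves with $l=4$ can be internal transitions. Such a move reverses an adjacent pair $ab$ or $ba$ with $a\in\{U,D\}$, $b\in\{H,I\}$; this changes neither $|x|_U$, nor the $H/I$ subsequence, nor $\s(x)$, and it always yields a legal 2-Motzkin path, since sliding a flat step past a $U$ or a $D$ cannot create a negative partial height. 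Conversely every such reversal is an internal transition. Reading off the transition rule, each of these adjacent transpositions occurs with probability $\tfrac14\cdot\tfrac1{m-1}\cdot\tfrac12=\tfrac1{8(m-1)}$ in $P_{k,q,s}$, and since every element of $U_{k,q,s}$ has the same symbol counts and hence the same energy, the stationary distribution of $\M_{k,q,s}$ is uniform on $U_{k,q,s}$.

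Next I set up the isomorphism. Send $x\in U_{k,q,s}$ to the binary string $\varphi(x)$ of length $m$ whose $i$th entry is $1$ if $x(i)\in\{U,D\}$ and $0$ otherwise. Because $s$ and $q$ are fixed, $x$ is recovered from $\varphi(x)$ by writing the symbols of $s$ in order on the $1$-coordinates and the symbols of $q$ in order on the $0$-coordinates; and since flat steps never change heights, every binary string with exactly $2k$ ones arises this way. Thus $\varphi$ is a bijection from $U_{k,q,s}$ onto the set of length-$m$ binary strings with $2k$ ones, and under $\varphi$ an $l=4$ move is precisely the transposition of two adjacent coordinates when one is $0$ and the other $1$. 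Consequently $\M_{k,q,s}$ is isomorphic to the chain on that set which picks one of the $m-1$ adjacent coordinate pairs uniformly at random and, if the two coordinates differ, swaps them with probability $\tfrac12$ --- all thinned by the factor $\tfrac14$ coming from the choice of $l$. Writing $Q$ for the transition matrix of the un-thinned chain, we have $P_{k,q,s}(x,y)=\tfrac14 Q(x,y)$ for all $x\neq y$, and $Q(x,y)\le 4P_{k,q,s}(x,y)$ for all $x,y$.

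Finally I quote the relaxation time. The un-thinned chain is the single-adjacent-transposition chain on length-$m$ binary strings with a prescribed number of ones --- equivalently, the discrete-time symmetric exclusion process on the path on $m$ vertices --- and it has relaxation time $O(m^3)$ (its spectral gap is of order $m^{-3}$); this is contained in Wilson's analysis of adjacent-transposition chains \cite{wilson2004}. Since the two chains share the uniform stationary distribution and $Q(x,y)\le 4P_{k,q,s}(x,y)$ everywhere, Lemma~\ref{prob-comp-lemma} yields $\Gap(Q)\le 4\,\Gap(P_{k,q,s})$, hence $\tau_{rel}(\M_{k,q,s})=1/\Gap(P_{k,q,s})\le 4/\Gap(Q)=O(m^3)$, with a bound uniform over all valid triples $(k,q,s)$. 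The degenerate cases $k=0$ and $2k=m$, in which $U_{k,q,s}$ is a single point, are vacuous.

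The only place calling for genuine care is the first step: one must rule out any stray $l=3$ transition or boundary effect that could be internal to $U_{k,q,s}$, and (in the second step) confirm that the non-negativity constraint really is automatic, so that $\varphi$ surjects onto \emph{all} binary strings with $2k$ ones and not merely a proper subset. Once $\M_{k,q,s}$ has been correctly identified as the \emph{unconstrained} adjacent-transposition chain, the $O(m^3)$ estimate is off the shelf and only the normalizing factor needs to be tracked.
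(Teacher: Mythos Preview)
Your proposal is correct and follows essentially the same approach as the paper: identify the restricted chain $\M_{k,q,s}$ with the adjacent-transposition chain on binary strings of length $m$ with a fixed number of ones, and invoke Wilson's $O(m^3)$ relaxation-time bound. The only cosmetic difference is that the paper absorbs the $\tfrac14$ thinning directly into an isomorphic lazy chain (do nothing with probability $7/8$), whereas you factor it out and transfer the bound via Lemma~\ref{prob-comp-lemma}; your write-up is also more explicit in ruling out the $l=1,2,3$ moves and in checking surjectivity of $\varphi$, details the paper leaves implicit.
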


\begin{proof}
    Notice that transitions in $\M_{k, q, s}$ consist only of moves which involve swapping an $H$ or an $I$ with an adjacent $U$ or $D$. Additionally, all 2-Motzkin paths in $U_{k,q,s}$ have equal energy, so for all $x, y \in U_{k,q,s}$ such that $P(x,y) > 0$, we have $P(x,y) = \frac{1}{8(m-1)}$. 

    To determine the mixing time of $\M_{k, q, s}$, consider an isomorphic chain. Let $U'$ be the set of all binary strings of length $m$ with $2k$ zeros and $m-2k$ ones. Let $\M'$ be the Markov chain on $U'$ where each step does nothing with probability $7/8$ and swaps a random pair of adjacent (potentially identical) digits with probability $1/8$. From Wilson~\cite{wilson2004}, we know that the spectral gap of $\M'$ is $\frac{1}{O(m^3)}$.
\end{proof}

%%%%%%%%%%%%%%%%%%%%%%%%%%%%%%%%%%%%%%%%%%%%%%%%%%%%%%%%%%%%%%%%%%%%%%%%%%%%%%%%%%%%%%%%%%%%%%%%%%%%%%%%%%%%%%%%%%%%%%%%%%%%%%%%%%%%%%%%%%%%%%%%%%%
Finally, we can combine our bounds on the spectral gaps of all of these chains to prove our main result.

\begin{theorem}\label{main-mixing}
The Markov chain $\M$ has relaxation time $\tau_{rel}(\M) = O(m^{7})$\,, for all $\alpha, \beta \in \mathbb{R}$. 
%spectral gap $\Gap(\M) = \frac{1}{(m^{7})}$ for all $\alpha, \beta \in \mathbb{R}$. 
\end{theorem}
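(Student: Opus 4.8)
The plan is to assemble the four lemmas through the two decomposition theorems, working from the innermost (most restricted) chains outward. Concretely, I would chain together the decompositions in the reverse order from how they were set up: first combine $\M_{k,q,s}$ (Lemma~\ref{chain4}, relaxation time $O(m^3)$) with $\proj{\M}_{k,q}$ (Lemma~\ref{chain3}, relaxation time $O(m^2)$) to bound $\Gap(\M_{k,q})$; then combine $\M_{k,q}$ with $\proj{\M}_k$ (Lemma~\ref{chain2}, relaxation time $O(m\log m)$) to bound $\Gap(\M_k)$; and finally combine $\M_k$ with $\proj{\M}$ (Lemma~\ref{chain1}, relaxation time $O(m^4)$) to bound $\Gap(\M)$. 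At each stage I would invoke either Theorem~\ref{decomp-theorem-randall} or Theorem~\ref{decomp-theorem-hermon}, taking the minimum over the restriction index (which is uniform in the lemmas, so the $\min$ is harmless).

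The key bookkeeping step is to decide which decomposition theorem to apply at each level, since the excerpt explicitly flags that Hermon--Salez (Theorem~\ref{decomp-theorem-hermon}) gives the better $\min\{\chi\Gap(\proj P), \min_i \Gap(P_i)\}$ bound but needs an effective coupling of quality $\chi$ bounded below by a constant, whereas Martin--Randall (Theorem~\ref{decomp-theorem-randall}) costs a factor of $\tfrac12\Gap(\proj P)\cdot\min_i\Gap(P_i)$ but is unconditional. If one were forced to use Theorem~\ref{decomp-theorem-randall} at all three levels, the gaps would multiply: $\Gap(\M)\gtrsim \Gap(\proj{\M})\cdot\Gap(\proj{\M}_k)\cdot\Gap(\proj{\M}_{k,q})\cdot\Gap(\M_{k,q,s})$, i.e. relaxation time $O(m^4\cdot m\log m\cdot m^2\cdot m^3) = O(m^{10}\log m)$, which overshoots the claimed $O(m^7)$. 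So to hit $m^7 = m^4\cdot m^3$ (the product of only the outermost projection chain and the innermost restriction chain) one must arrange that at the two inner decomposition levels Theorem~\ref{decomp-theorem-hermon} applies with $\chi = \Omega(1)$, so that those two levels contribute only a $\min$ (not a product) and the $O(m^2)$ and $O(m\log m)$ relaxation times are absorbed — being dominated by $O(m^3)$ at the innermost level and by $O(m^4)$ at the outermost level respectively. Then a single application of Theorem~\ref{decomp-theorem-randall} (or even Hermon--Salez, if a coupling is available) at the outermost level gives $\Gap(\M)\gtrsim \Gap(\proj{\M})\cdot\min_k\Gap(\M_k) \gtrsim \tfrac{1}{m^4}\cdot\tfrac{1}{m^3}$, whence $\tau_{rel}(\M)=O(m^7)$.

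I expect the main obstacle to be verifying the hypothesis of Theorem~\ref{decomp-theorem-hermon} at the two inner levels — namely exhibiting, for each pair of adjacent blocks in the $\{H,I\}$-pattern decomposition and in the skeleton decomposition, an effective coupling $\kappa_{ij}$ of the conditional stationary distributions $\pi_i,\pi_j$ whose quality $\chi$ is bounded below by an absolute constant (independent of $m,k,q,s,\alpha,\beta$). For the skeleton level this should be clean: within a fixed $T_{k,q}$ all states have equal energy, the restriction chains $\M_{k,q,s}$ are uniform on sets of equal size $\binom{m}{2k}$, and a transposition move between adjacent skeletons lifts to a natural bijective coupling, so $\pi(x)P(x,y)/(\proj\pi(i)\proj P(i,j)\kappa_{ij}(x,y))$ should come out to an exact constant. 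For the $\{H,I\}$-pattern level one must check that the product-form stationary weights $\proj\pi_k(q)\propto e^{(\beta-\alpha)|q|_H}$ interact well with the single-coordinate flip moves; here the coupling $\kappa_{ij}$ that resamples the one differing coordinate according to its marginal $\tfrac{e^{-\alpha}}{e^{-\alpha}+e^{-\beta}}$ vs.\ $\tfrac{e^{-\beta}}{e^{-\alpha}+e^{-\beta}}$ is the natural candidate, and one needs the resulting $\chi$ to stay bounded below uniformly — the worry being whether the skeleton and $H$/$I$ positions' interleaving causes the lift of a projection transition to spread its mass nonuniformly. A fallback, if $\chi$ cannot be made constant at some level, is to use Lemma~\ref{proj-lemma} together with Lemma~\ref{log-concave-mixing} (as in the proof of Lemma~\ref{chain1}) to sharpen the projection-chain gaps directly, but I anticipate the Hermon--Salez route is what the $O(m^7)$ bound is designed around.
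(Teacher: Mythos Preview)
Your proposal is correct and matches the paper's proof essentially line for line: Hermon--Salez (Theorem~\ref{decomp-theorem-hermon}) with $\chi=1$ at the two inner levels, then Martin--Randall (Theorem~\ref{decomp-theorem-randall}) at the outermost level, yielding $\Gap(\M)\gtrsim\Gap(\proj{\M})\cdot\min_k\Gap(\M_k)\gtrsim m^{-4}\cdot m^{-3}$. The one place you hedge unnecessarily is the $\{H,I\}$-pattern level: the conditional distributions $\pi_{q_1},\pi_{q_2}$ are \emph{uniform} on $T_{k,q_1},T_{k,q_2}$ (all elements of a fixed $T_{k,q}$ have identical energy), and the single-coordinate flip is a bijection between these equal-size sets, so the same perfect-matching coupling you describe for the skeleton level works here verbatim with $\chi=1$---no resampling or dependence on $\alpha,\beta$ enters.
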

\begin{proof}
We use Lemmas \ref{chain4} and \ref{chain3} with Theorem \ref{decomp-theorem-hermon} to obtain a bound on $\Gap(\M_{k,q})$. We define a coupling $\kappa_{s_1, s_2}$ for each pair $(s_1, s_2) \in \DPaths_k \times \DPaths_k$ with $\proj{P}_{k,q}(s_1, s_2) > 0$. For each such pair, notice that the set of pairs $(x,y) \in U_{k,q,s_1} \times U_{k,q,s_2}$ with $P(x,y) > 0$ is a perfect matching. Thus, we may set 
$$
\kappa_{s_1,s_2}(x,y) = 
\begin{cases}
    \frac{1}{\binom{m}{2k}} & \textnormal{if } P(x,y) > 0 \\
    0 & P(x,y) = 0
\end{cases}.
$$
With this coupling, we obtain $\chi = 1$, so Theorem \ref{decomp-theorem-hermon} gives $\Gap(\M_{k,q}) = O(\frac{1}{m^{3}})$.

Similarly, we define a coupling $\kappa_{q_1, q_2}$ for each pair $(q_1, q_2) \in (H + I)^{m-2k} \times (H + I)^{m-2k}$ with $\proj{P}_k(q_1, q_2) > 0$ to apply Theorem \ref{decomp-theorem-hermon} to $\proj{M}_k$. Notice that once again, the set of pairs $(x,y) \in T_{k,q_1} \times T_{k,q_2}$ for which $P(x,y) > 0$ forms a perfect matching. Thus, we take 
$$
\kappa_{q_1,q_2}(x,y) = 
\begin{cases}
    \frac{1}{\binom{m}{2k}C_k} & \textnormal{if } P(x,y) > 0 \\
    0 & P(x,y) = 0
\end{cases}.
$$
This gives $\chi = 1$, so we have $\Gap(\M_k) = O(\frac{1}{m^{3}})$ by Theorem \ref{decomp-theorem-hermon}.

Unfortunately, we have not been able to find a useful coupling for $\proj{\M}$, so for the last step of our decomposition, we apply Theorem \ref{decomp-theorem-randall}. Since $\Gap(\proj{\M}) = \frac{1}{m^{4}}$ and $\Gap(\M_k) = O(\frac{1}{m^{3}})$ for all $k$, we have $\Gap(\M) = O(\frac{1}{m^7})$, establishing Theorem \ref{main-mixing}.
\end{proof}

\section{Discussion and Conclusions}
\label{conclusion}

We have shown the existence of a Markov chain, with a provably polynomial mixing time, which generates a Gibbs distribution on plane trees, where the energy of each tree depends on $d_{0}$ and $d_{1}$.
This target probability distribution models certain aspects of RNA secondary structure under the Nearest Neighbor Thermodynamic Model.
While exploration of sampled structures obtained from this algorithm are beyond the scope of this manuscript, we do provide pseudocode (see Appendix \ref{pseudocode}) which we hope will facilitate future work in this area.

We conclude with a few open questions that strongly compel further investigation of the topic we explored here.
\begin{enumerate}
    \item Can the mixing time bound in our main result be improved? %We have no reason to believe that the bound obtained here is tight. 
    \item Is there a rapidly mixing chain, with the same stationary distribution studied here, whose transitions correspond naturally to moves on the set plane trees? Mixing time bounds on the chain of matching exchange moves, as defined in \cite{Heitsch_Tetali_11}, would be especially interesting, as such a chain may relate to RNA folding kinetics.
    \item Is there a rapidly mixing chain converging to the Gibbs distribution using the full energy function presented in Appendix \ref{energy_derivation}? The chain presented here uses only the parameters $\alpha $ and $\beta$, setting $\gamma =0$. 
\end{enumerate}

\section{Acknowledgements}
The authors would like to thank Christine Heitsch for introducing them to the questions explored in this manuscript and for valuable insights regarding the biological motivation. The authors would also like to thank Prasad Tetali for his advice regarding the Markov chain analysis and for pointing out recent work in the area.

Kirkpatrick would like to acknowledge support provided by the National Science Foundation. This material is based upon work supported by the National Science Foundation Graduate Research Fellowship under Grant No. DGE-1148903. Kirkpatrick would also like to acknowledge support from NIH grant R01GM126554 and NSF grant DMS1344199, both awarded to Christine Heitsch. Patton would like to acknowledge support from the President's Undergraduate Research Awards program at the Georgia Institute of Technology.

\printbibliography

%%%%%%%%%%%%%%%%%%%%%%%%%%%%%%%%%%%%%%%%%%%%%%%%%%%%%%%%%%%%%%%%%%%%%%%%%%%%%%%%%%%%%%%%%%%%%%%%%%%%%%%%%%%%%%%%%%%%%%%%%%%%%%%%%%%
%  Appendix
%%%%%%%%%%%%%%%%%%%%%%%%%%%%%%%%%%%%%%%%%%%%%%%%%%%%%%%%%%%%%%%%%%%%%%%%%%%%%%%%%%%%%%%%%%%%%%%%%%%%%%%%%%%%%%%%%%%%%%%%%%%%%%%%%%%
\appendix
\appendixpage

\section{Derivation of Energy Functions}
\label{energy_derivation}

We derive the energy function studied in this manuscript from the Nearest Neighbor Thermodynamic Model (NNTM).
The numerical parameters from the NNTM can be found in the NNDB \cite{nndb}.
In calculating energy functions for the sequences, we will consider
thermodynamic parameter values published by Turner in 1989 \cite{Turner_89}, 1999 \cite{mathews1999}, and 2004 \cite{mathews2004}.

The plane trees that we study in this paper come from two combinatorial
RNA sequences, both of the form \( A^4 ( Y^5 Z A^4 Y Z^5 A^4 )^n\).
The sequences of interest have \( ( Y , Z ) = ( C , G )\) or \( ( Y
, Z ) = ( G , C )\).
For both of these sequences, the set of maximally-paired secondary
structures is in bijection with the set of plane trees of size \( n\).

Three constants determine the free energy contribution of multiloops under NNTM,
\( a\), \( b\), and \( c\).
The value of \( a\) encodes the energy penalty per multiloop.
The constant \( b\) specifies the energy penalty per single-stranded
nucleotide in a multiloop.
The value of \( c\) gives the energy penalty for each helix branching
from a multiloop.

In addition to the multiloop parameters \( a , b , c\) discussed above,
we must account for the energy contributions of stacking base pairs, hairpins, interior
loops, and dangling energy contributions.
The energy of one helix is given by \( h\). 
The energy associated with a hairpin is \( f\), and the energy contribution of an interior loop is \( i\).
Finally, the parameter \( g\) encodes the dangling energy
contributions. 
All of these values can be computed directly from the parameters found in the NNTM.

We wish to compute the energy of the structure having (down) degree
sequence \( d_0 , d_1 , \ldots , d_n\) and root degree \( r\). 
The energy contribution of all hairpin loops will be \( d_0 f\), and
similarly the total energy of all interior loops will be \( d_1 i\). 
For a multi-loop having down degree \( j\), the energy contribution
will be \( a + 4 b ( j + 1 ) + c ( j + 1 ) + ( j + 1 ) g\), and so the
contribution of all multi-loops is given by \( \sum_{ j = 2 }^n d_j ( a
+ 4 b ( j + 1 ) + c ( j + 1 ) + g ( j + 1 )\). 
The root vertex of the tree corresponds to the exterior loop and has
energy contribution \( g r\). 
Finally, our structure has \( n\) helices, each with energy \( h\). 
Summing all of these components gives the total energy.
\begin{align}
    & d_0 f + d_1 i + \sum_{ j = 2 }^n d_j ( a + 4 b ( j + 1 ) + c ( j + 1
    ) + g ( j + 1 ) ) + n h \\
    & = ( f - a - 4 b - c - g ) d_0 + ( i - a - 8 b - 2 c - 2 g ) d_1 + ( -
    4 b - c ) r + ( a + 8 b + 2 c + h + 2 g ) n.
\end{align}

Set \( \alpha = f - a - 4 b - c - g\), \( \beta = i - a - 8 b - 2 c - 2
g\), \( \gamma = - 4 b - c\), and \( \delta = a + 8 b + 2 c + h + 2
g\). 
Then, the energy function is \( \alpha d_0 + \beta d_1 + \gamma r
+ \delta n\).
Since \(n\) will be fixed, we disregard the term \(\delta n\), giving
\begin{equation}
    E(T) = \alpha d_0 + \beta d_1 + \gamma r.
\end{equation}

Though we study these energy functions for arbitrary values of \( (\alpha, \beta, \gamma) \), numerical values for both the input energy parameters from NNTM and the resulting energy function coefficients are given in Table \ref{energy_functions_table}.

\begin{table}[htbp]
\begin{center}
     \begin{tabular}{ || c | c | c || c | c | c | c | c | c | c || c | c | c | c ||} 
          \hline
            Y & Z & Turner & a & b & c & h & f & i & g & \( \alpha \) & \( \beta \) & \( \gamma \) \\
           \hline\hline
              C & G & 89 & 4.6 & 0.4 & 0.1 & -10.9 & 3.8 & 3.0 & -1.6 & -0.9 & -1.8 & -1.7  \\ 
            \hline
               G & C & 89 & 4.6 & 0.4 & 0.1 & -16.5 & 3.5 & 3.0 & -1.9 & -0.9 & -1.2 & -1.7 \\
             \hline
               C & G & 99 & 3.4 & 0 & 0.4 & -12.9 & 4.5 & 2.3 & -1.6 & 2.3 & 1.3 & -0.4 \\
              \hline
                 G & C & 99 & 3.4 & 0 & 0.4 & -16.9 & 4.1 & 2.3 & -1.9 & 2.2 & 1.9 & -0.4  \\
               \hline 
                  C & G & 04 & 9.3 & 0 & -0.9 & -12.9 & 4.5 & 2.3 & -1.1 & -2.8 & -3.0 & 0.9 \\
                \hline
                   G & C & 04 & 9.3 & 0 & -0.9 & -16.9 & 4.1 & 2.3 & -1.5 & -2.8 & -2.2 & 0.9 \\ 
                 \hline

             \end{tabular}
             \end{center}
             \caption{NNTM parameters and resulting energy functions. Energy functions are of the form \( \alpha d_0 + \beta d_1 + \gamma r \).}
             \label{energy_functions_table}
         \end{table}

\section{Algorithm Pseudocode}
\label{pseudocode}
The Markov chain $\M$ can be implemented in pseudocode as in Algorithm~\ref{sample-algo}. Here, the $\text{Ber}(p)$ function returns true with probability $p$, and false otherwise. We also use addition of strings to denote concatenation.

\begin{algorithm}
\caption{Calculate $X_t$ given $X_0$.} \label{sample-algo}
\begin{algorithmic}
\REQUIRE $X_0$ is a valid 2-Motzkin path of length $m$.
\STATE $x \leftarrow X_0$
\FOR {$s = 1 \to t$}
    \STATE $y \leftarrow x$
    \STATE $l \leftarrow \text{randInt}(1,4)$
    \IF{$l = 1$}
        \STATE $i \leftarrow \text{randInt}(1,m-1)$
        \IF {$x[i:i+1] = UD$ \AND $\text{Ber}\left( \frac{e^{-\alpha}}{2(1 + e^{-\alpha})}\right)$}
            \STATE $y[i:i+1] \leftarrow HH$
        \ELSIF {$x[i:i+1] = HH$ \AND $\text{Ber}\left(\frac{1}{2(1 + e^{-\alpha})}\right)$}
            \STATE $y[i:i+1] \leftarrow UD$
        \ENDIF
    \ELSIF {$l = 2$}
        \STATE $i \leftarrow \text{randInt}(1,m)$
        \IF {$x[i] = I$ \AND $\text{Ber}\left(\frac{e^{-\alpha}}{2(e^{-\alpha}+e^{-\beta})}\right)$}
            \STATE $y[i] \leftarrow H$
        \ELSIF {$x(i) = H$ \AND $\text{Ber}\left(\frac{e^{-\beta}}{2(e^{-\alpha}+e^{-\beta})}\right)$}
            \STATE $y[i] \leftarrow I$
        \ENDIF
    \ELSIF{$l = 3$}
        \STATE $i \leftarrow \text{randInt}(1,m)$
        \STATE $j \leftarrow \text{randInt}(1,m)$
        \IF {($x[i] \in \{U, D\}$ \AND $x[j] \in \{U, D\}$) \AND $\text{Ber}\left(\frac{1}{2}\right)$}
            \STATE $y[i] \leftarrow x[j]$
            \STATE $y[j] \leftarrow x[i]$
            \IF {$y$ is not a valid 2-Motzkin path}
                \STATE $y \leftarrow x$
            \ENDIF
        \ENDIF
    \ELSIF{$l = 4$}
        \STATE $i \leftarrow \text{randInt}(1,m-1)$
        \IF {($x[i] \in \{U, D\}$ \AND $x[j+1] \in \{H, I\}$) 
                \OR ($x[i] \in \{H, I\}$ \AND $x[j+1] \in \{U, D\}$) 
                \AND $\text{Ber}\left(\frac{1}{2}\right)$}
            \STATE $y[i:i+1] \leftarrow x[j+1] + x[j]$
        \ENDIF
    \ENDIF
    \STATE $x \leftarrow y$
\ENDFOR
\RETURN $x$
\end{algorithmic}
\end{algorithm}

Additionally, in order to convert the 2-Motzkin path $X_t$ into a plane tree, we may use the Algorithm~\ref{decode-algo}. In Algorithm~\ref{decode-algo}, we assume the existence of a Node object with a children and parent attributes.

\begin{algorithm}
\caption{Calculate $\Phi^{-1}(x)$.} \label{decode-algo}
\begin{algorithmic}
\REQUIRE $x$ is a valid 2-Motzkin path of length $m$.
\STATE root $\leftarrow$ new Node()
\STATE // $u$ will be where a new node will be added for an $H$ or $D$ symbol
\STATE $u \leftarrow$ root
\STATE // $v$ will be always the last node added
\STATE $v \leftarrow$ new Node()
\STATE // the stack will keep track of previous values of $u$
\STATE stack = new Stack()
\STATE root.children.append($v$)
\FOR{$i = 1 \to m$}
    \STATE node $\leftarrow$ new Node()
    \IF {$x[i] = U$}
        \STATE $v$.children.append(node)
        \STATE stack.push($u$)
        \STATE $u \leftarrow v$
    \ELSIF {$x[i] = I$}
        \STATE $v$.children.append(node)
    \ELSIF {$x[i] = H$}
        \STATE $u$.children.append(node)
    \ELSIF {$x[i] = D$}
        \STATE $u$.children.append(node)
        \STATE $u \leftarrow$ stack.pop()
    \ENDIF
    \STATE $v \leftarrow$ node
\ENDFOR
\RETURN root
\end{algorithmic}
\end{algorithm}
\end{document}